\title[On deformations and $K$-cohomology]{On the algebraizability of formal deformations in $K$-cohomology}
\author{Eoin Mackall}
\email{eoinmackall \emph{at} gmail.com}
\urladdr{\url{www.eoinmackall.com}}
\date{\today}
\keywords{K-cohomology}
\subjclass{14C35}
\newtheorem{thm}{Theorem}[section]
\newtheorem{prop}[thm]{Proposition}
\newtheorem{cor}[thm]{Corollary}
\newtheorem{lem}[thm]{Lemma}
\theoremstyle{definition}
\newtheorem{defn}[thm]{Definition}
\newtheorem{exmp}[thm]{Example}
\newtheorem{rmk}[thm]{Remark}
\newcommand{\HH}{\mathrm{H}}
\newcommand{\CH}{\mathrm{CH}}
\begin{document}
	\maketitle
	\begin{abstract}
We show that algebraizability of the functors $R^1\pi_*\mathcal{K}^M_{2,X}$ and $R^2\pi_*\mathcal{K}^M_{2,X}$ is a stable birational invariant for smooth and proper varieties $\pi:X\rightarrow k$ defined over an algebraic extension $k$ of $\mathbb{Q}$. The same is true for the \'etale sheafifications of these functors as well.

To get these results we introduce a notion of relative $K$-homology for schemes of finite type over a finite dimensional, Noetherian, excellent base scheme over a field. We include this material in an appendix.
\end{abstract}

\section{Introduction}
Let $k/\mathbb{Q}$ be an algebraic field extension and let $\pi:X\rightarrow k$ be a smooth and proper $k$-variety. In \cite{MR371891}, Bloch gives a cohomological criterion for the pro-representability of the higher direct image $R^i\pi_*\mathcal{K}_{2,X}$ of the $2$nd $K$-theory sheaf $\mathcal{K}_{2,X}$ on the big Zariski site of $X$. Specifically, if $F_X^i:\mathsf{Art}_{k}\rightarrow \mathsf{Ab}$ is defined on the category $\mathsf{Art}_k$ of artinian local $k$-algebras $(A,\mathfrak{m}_A)$ with residue field $A/\mathfrak{m}_A\cong k$ by \begin{equation}F_X^i(A)=\mathrm{ker}\left(\mathrm{H}^i(X_A,\mathcal{K}_{2,X_A})\rightarrow \mathrm{H}^i(X,\mathcal{K}_{2,X})\right)\end{equation} then Bloch shows that $F_X^i$ is pro-representable if and only if $\mathrm{H}^i(X,\mathcal{O}_X)=\mathrm{H}^{i+1}(X,\mathcal{O}_X)=0$.

The cohomology of the structure sheaf $\mathcal{O}_X$ is well-known to be a stable birational invariant of $X$, \cite{MR2923726}. Combined with Bloch's criterion, this implies that for $X,Y$ smooth, proper, and stably birational varieties over $k$, the functor $R^i\pi_*\mathcal{K}_{2,X}$ is pro-representable (at any $L$-point for any finite field extension $L/k$) if and only if $R^i\phi_{*}\mathcal{K}_{2,Y}$ is as well, where $\phi:Y\rightarrow k$ is the structure map of $Y$. We show in Theorems \ref{thm: birational} and \ref{thm: birational2} below that, in either the case $i=1$ or $i=2$, the \textit{algebraizability} of the functor $R^i\pi_{*}\mathcal{K}_{2,X}$ is also a stable birational invariant of $X$. (Below we will replace $\mathcal{K}_{2,X}$ with the Milnor $K$-theory sheaf $\mathcal{K}^M_{2,X}$. The two are canonically isomorphic, so the difference is mostly notational.)

The proofs of Theorems \ref{thm: birational} and \ref{thm: birational2} rely on the use of weak factorization, as well as a description of both $K$-cohomology groups and the algebraic de Rham cohomology groups of projective bundles and blow ups. We include, in an appendix, an overview of the necessary results on $K$-cohomology which also corrects some minor errors in the development of these groups available in the literature. Throughout this article we also point out how to apply these methods to achieve the same result for the \'etale sheafification $(R^i\pi_{*}\mathcal{K}^M_{2,X})_{\acute{e}t}$ of the higher direct image functor $R^i\pi_{*}\mathcal{K}^M_{2,X}$ for $i=1$ or $i=2$.

The functor $R^n\pi_*\mathcal{K}^M_{n,X}$ can be thought of as a direct generalization to ``codimension-$n$ cycle classes" of the Zariski sheafification of the Picard functor $\mathrm{Pic}_{X/k,(Zar)}=R^1\pi_*\mathcal{K}^M_{1,X}$, the latter of which is well-known to be representable for sufficiently nice varieties $X$. The functors $R^i\pi_*\mathcal{K}^M_{n,X}$ for $i\neq n$ can also be thought of as a ``higher generalization" of the Picard functor but, varying the \textit{weight} and \textit{codimension} independently. Algebraizability of the functors $R^i\pi_*\mathcal{K}^M_{n,X}$ essentially means that for any finite field extension $L/k$ and for any $L$-point $\xi$ of $R^i\pi_*\mathcal{K}^M_{n,X}$, one can find a pointed finite type $k$-scheme $(Z,z)$ with residue field $k(z)\cong L$ and a deformation of $\xi$ on $Z$ which formally locally around $z$ is the universal deformation of $\xi$.

While not implying representability directly, it is known by \cite[Theorem 4.1]{MR0260746} that if $(R^i\pi_*\mathcal{K}^M_{n,X})_{\acute{e}t}$ is both algebraizable and \textit{relatively representable} then $(R^i\pi_*\mathcal{K}^M_{n,X})_{\acute{e}t}$ is in fact representable. One is naturally led to wonder if $(R^i\pi_*\mathcal{K}^M_{n,X})_{\acute{e}t}$ is representable in any interesting examples.

\section{Background}\label{sec:back}
Let $\pi:X\rightarrow k$ be the structure map of an arbitrary $k$-variety $X$. We write $\mathcal{K}^M_{n,X}$ for the $n$th Milnor $K$-theory sheaf on the big Zariski site $\mathsf{Sch}/X$ (since we will assume that $k$ has characteristic zero below, and also $n=2$, one could use the isomorphic Quillen $K$-theory sheaf to the same effect). We write $R^i\pi_*\mathcal{K}^M_{n,X}$ for the associated Zariski sheaf on $\mathsf{Sch}/k$. For any $k$-scheme $T$, we have \begin{equation}R^i\pi_*\mathcal{K}^M_{n,X}(T)=\mathrm{H}^0(T,R^i\pi_{T*}\mathcal{K}^M_{n,X_T})\end{equation} where $\pi_{T}:X_T\rightarrow T$ is the map obtained from $\pi$ by base change. 

\begin{rmk}\label{rmk:local}
	Let $(A,\mathfrak{m}_A)$ and $(B,\mathfrak{m}_B)$ be local $L$-algebras for any field extension $L/k$. If $T=\mathrm{Spec}(A)$ and $T'=\mathrm{Spec}(B)$, then for any morphism $\rho:T'\rightarrow T$ the induced pullback map \begin{equation} \mathrm{H}^0(T,R^i\pi_{T*}\mathcal{K}^M_{n,X_T})\rightarrow \mathrm{H}^0(T',R^i\pi_{{T'}*}\mathcal{K}^M_{n,X_{T'}})\end{equation} is equivalent to the natural map of groups $\mathrm{H}^i(X_T,\mathcal{K}^M_{n,X_T})\rightarrow \mathrm{H}^i(X_{T'},\mathcal{K}^M_{n,X_{T'}})$.
\end{rmk}

We say that an abelian sheaf $\mathcal{F}$ on the big Zariski site $\mathsf{Sch}/k$ is \textit{pro-representable} if it is so around any $L$-point $\xi\in \mathcal{F}(\mathrm{Spec}(L))$ for any finite field extension $L/k$ (see \cite{MR217093} for the definition of a pro-representable functor of artin local $L$-algebras). Since $\mathcal{F}$ is a sheaf of groups, it is enough to check that this is true only around the identity element of $\mathcal{F}(\mathrm{Spec}(L))$. In this case, being pro-representable is equivalent to the existence of a complete local noetherian $L$-algebra $(R,\mathfrak{m}_R)$ such that $R/\mathfrak{m}_R\cong L$, the quotient $R/\mathfrak{m}_R^t$ is a local artinian $L$-algebra for all $t\geq 2$, and such that there is a natural (in $A$) isomorphism \begin{equation}\mathrm{Hom}_{\text{local $L$-alg.}}(R,A)\cong \mathrm{ker}\left(\mathcal{F}(\mathrm{Spec}(A))\rightarrow \mathcal{F}(\mathrm{Spec}(L))\right)\end{equation} for any local artinian $L$-algebra $(A,\mathfrak{m}_A)$. From now on, for any pointed $L$-scheme $(X,x)$ with residue field $L(x)\cong L$ or for any $L$-algebra $A$ with maximal ideal $\mathfrak{m}$ admitting an isomorphism $A/\mathfrak{m}\cong L$, we write \begin{equation} \mathrm{Def}(\mathcal{F})(X)=\mathrm{ker}\left(\mathcal{F}(X)\rightarrow \mathcal{F}(x)\right) \quad \mbox{or}\quad \mathrm{Def}(\mathcal{F})(A)=\mathrm{ker}\left(\mathcal{F}(\mathrm{Spec}(A))\rightarrow \mathcal{F}(\mathrm{Spec}(A/\mathfrak{m}))\right)\end{equation} for the sets of \textit{$X$-deformations, or $A$-deformations, of the identity of $\mathcal{F}(\mathrm{Spec}(L))$} respectively.

We say that an abelian sheaf $\mathcal{F}$ which is pro-representable is \textit{effectively pro-representable} if it is so around any $L$-point $\xi\in \mathcal{F}(\mathrm{Spec}(L))$ for any finite field extension $L/k$. Again, since $\mathcal{F}$ is a sheaf of groups, it is enough to check this condition around the identity element of the group $\mathcal{F}(\mathrm{Spec}(L))$. Then $\mathcal{F}$ is effectively pro-representable if the formal deformation corresponding to the identity of $R$ is effective, i.e.\ if there exists an element $u_R\in \mathrm{Def}(\mathcal{F})(R):=\mathrm{ker}(\mathcal{F}(\mathrm{Spec}(R)\rightarrow \mathcal{F}(\mathrm{Spec}(L)))$ mapping to the identity of $R$ under the map \begin{equation}\mathrm{Def}(\mathcal{F})(R)\rightarrow \mathrm{Hom}_{\text{local $L$-alg.}}(R,R)\cong \varprojlim_t \mathrm{Def}(\mathcal{F})(R/\mathfrak{m}_R^t).\end{equation} For any complete noetherian local $L$-algebra $(S,\mathfrak{m}_S)$ with residue field $S/\mathfrak{m}_S\cong L$ such that $S/\mathfrak{m}_S^t$ is an artianian local $L$-algebra for all $t\geq 2$, we will write $\overline{\mathrm{Def}}(\mathcal{F})(S):=\varprojlim_{t} \mathrm{Def}(\mathcal{F})(S/\mathfrak{m}_S^t)$ for the set of \textit{formal $S$-deformations of the identity of $\mathcal{F}(\mathrm{Spec}(L))$}.

\begin{rmk}\label{rmk:changingrings} Now let $(S,\mathfrak{m}_S)$ be any complete noetherian local $L$-algebra such that $S/\mathfrak{m}_S\cong L$ and $S/\mathfrak{m}_S^t$ is an artinian local $L$-algebra for all $t\geq 2$. If $\mathcal{F}$ is an effectively pro-representable functor, then any formal $S$-deformation $\bar{\xi}\in \overline{\mathrm{Def}}(\mathcal{F})(S)$ is effective, i.e.\ there exists an element $\xi\in \mathrm{Def}(\mathcal{F})(S)$ mapping to $\bar{\xi}$. Indeed, there exists a local $L$-algebra homomorphism $f:R\rightarrow S$ so that $\overline{\mathcal{F}(f)}(\mathrm{id}_R)=\bar{\xi}$ in the diagram \begin{equation}\begin{tikzcd}
	\mathrm{Def}(\mathcal{F})(S)\arrow{r} & \overline{\mathrm{Def}}(\mathcal{F})(S)\\
	\mathrm{Def}(\mathcal{F})(R)\arrow{r}\arrow["\mathcal{F}(f)"]{u} & \overline{\mathrm{Def}}(\mathcal{F})(R)\arrow["\overline{\mathcal{F}(f)}"]{u}
\end{tikzcd}\end{equation} by the universal property of the $L$-algebra $(R,\mathfrak{m}_R)$.
\end{rmk}

We say that an abelian sheaf $\mathcal{F}$ which is effectively pro-representable is \textit{algebraizable} if it is so around any $L$-point $\xi\in \mathcal{F}(\mathrm{Spec}(L))$ for any finite field extension $L/k$ or, equivalently, if it is so around the identity for all such fields $L/k$. So an abelian sheaf $\mathcal{F}$ is algebraizable if there exists a finite type $k$-scheme $X$, a closed point $x\in X$ with residue field $k(x)\cong L$, an element $u_X\in \mathrm{Def}(\mathcal{F})(X)$, and an isomorphism $\widehat{\mathcal{O}}_{X,x}\cong R$ of local $L$-algebras such that $u_X$ induces the identity of $R$ under the canonical map \begin{equation}\mathrm{Def}(\mathcal{F})(X)\rightarrow \overline{\mathrm{Def}}(\mathcal{F})(\widehat{\mathcal{O}}_{X,x})\cong\mathrm{Hom}_{\text{local $L$-alg.}}(R,\widehat{\mathcal{O}}_{X,x})\cong \mathrm{Hom}_{\text{local $L$-alg.}}(R,R).\end{equation}

Since $\mathcal{K}^M_{n,X}$ is locally of finite presentation for all $n\in \mathbb{Z}$ by construction, it follows from \cite[\href{https://stacks.math.columbia.edu/tag/0A37}{Tag 0A37}]{stacks-project} that $R^i\pi_{*}\mathcal{K}^M_{n,X}$ is locally of finite presentation whenever $\pi$ is quasi-compact and quasi-separated. Similarly, if $\pi$ is quasi-compact and quasi-separated then the sheafifications \begin{equation}\left(R^i\pi_{*}\mathcal{K}^M_{n,X}\right)_{fppf}\quad \mbox{and}\quad \left(R^i\pi_{*}\mathcal{K}^M_{n,X}\right)_{\acute{e}t}\end{equation} for the fppf and \'etale topologies are locally of finite presentation by the proof of \cite[\href{https://stacks.math.columbia.edu/tag/049O}{Tag 049O}]{stacks-project}. Thus, by Artin's Algebraization theorem \cite[Theorem 1.6]{MR0260746}, for fixed $i$ and $n$ the functor $R^i\pi_*\mathcal{K}^M_{n,X}$ is algebraizable if it is effectively pro-representable.

Bloch shows in \cite[Theorem 0.2]{MR371891} that for a smooth and proper variety $X$ defined over an algebraic extension $k/\mathbb{Q}$, the functor $R^i\pi_*\mathcal{K}^M_{2,X}$ is pro-representable if and only if $\mathrm{H}^i(X,\mathcal{O}_X)=\mathrm{H}^{i+1}(X,\mathcal{O}_X)=0$. Moreover, Bloch gives a canonical isomorphism $\mathrm{Def}(R^i\pi_*\mathcal{K}^M_{2,X})(A)\cong \mathrm{H}^i(X_L,\Omega^1_{X_L})\otimes_L \mathfrak{m}_A$ for any artin local $L$-algebra $(A,\mathfrak{m}_A)$ with residue field $A/\mathfrak{m}_A\cong L$ and for any finite field extension $L/k$. In this setting, the representing ring $R$ as above is canonically isomorphic to a formal completion of the symmetric $L$-algebra of the dual $L$-vector space $V^\vee$ where $V=\mathrm{H}^i(X_L,\Omega^1_{X_L})$.

\section{In the case when $i=2$}
Throughout this section we fix an algebraic extension $k/\mathbb{Q}$. We first deal with the case $i=2$ where our functor $R^2\pi_*\mathcal{K}^M_{2,X}$ is comparable to the familiar Chow group of codimension two cycles.

\begin{rmk}\label{rmk: gerstform}
 Let $R=k[[t_1,...,t_r]]$ be a ring of formal power series over $k$ in finitely many variables. Let $X$ be a smooth scheme, geometrically connected and quasi-compact over $k$. Then the product $X'=X\times_k \mathrm{Spec}(R)$ is a connected, regular, and excellent Noetherian scheme of finite Krull dimension. Indeed, if $U=\mathrm{Spec}(B)$ is an affine open subset of $X$, then $B$ is a finitely generated $k$-algebra and $B\otimes_k R$ is Noetherian by Hilbert's basis theorem. Thus $X'$ is locally Noetherian and the projection $X'\rightarrow \mathrm{Spec}(R)$ is quasi-compact, so $X'$ is locally Noetherian and quasi-compact, hence Noetherian.
	
	Since $X$ is geometrically connected and $R$ is integral, it follows that $X'$ is connected, \cite[\href{https://stacks.math.columbia.edu/tag/0385}{Tag 0385}]{stacks-project}. One can bound the dimension of $X'$ in terms of the dimension of $X$ and $\mathrm{Spec}(R)$ using \cite[\href{https://stacks.math.columbia.edu/tag/0AFF}{Tag 0AFF}]{stacks-project} and \cite[\href{https://stacks.math.columbia.edu/tag/04MU}{Tag 04MU}]{stacks-project}. Excellence of $X'$ follows from \cite[\href{https://stacks.math.columbia.edu/tag/07QW}{Tag 07QW}]{stacks-project}.
	
	To see that $X'$ is regular, let $x\in X'$ be a point. Let $y\in \mathrm{Spec}(R)$ be the image of the point $x$ under the second projection $X'\rightarrow \mathrm{Spec}(R)$, and write $\mathfrak{m}_y\subset \mathcal{O}_{\mathrm{Spec}(R),y}$ for the maximal ideal of the local ring at $y$. The induced ring map $\mathcal{O}_{\mathrm{Spec}(R),y}\rightarrow \mathcal{O}_{X',x}$ is both local and, since $X$ is smooth, flat. Since there exists a canonical isomorphism \[\mathcal{O}_{X',x}/\mathfrak{m}_y\mathcal{O}_{X',x}\cong \mathcal{O}_{X'_y, x},\] between the fiber over $y$ of the local ring of $x$ in $X'$ and the local ring of $x$ in the fiber over $y$, and since $X$ is smooth, the local ring $\mathcal{O}_{X'_y,x}\cong \mathcal{O}_{X',x}/\mathfrak{m}_y\mathcal{O}_{X',x}$ is regular. As $\mathcal{O}_{\mathrm{Spec}(R),y}$ is also regular, it follows from \cite[Theorem 23.7]{MR879273} that $\mathcal{O}_{X',x}$ is regular too.
	
	In particular, the Gersten conjecture is known to hold for the local rings of $X'$. (For Quillen's $K$-theory in the case of finite type $k$-algebras this is due to Quillen \cite{MR0338129} and for Quillen's $K$-theory for more general regular rings by Panin \cite{MR2024050}. For Milnor's $K$-theory this result is due to Kerz \cite{MR2461425} when the base field contains enough elements, e.g.\ if it is infinite.)
\end{rmk}

\begin{lem}\label{lem: probunlem}
	Fix an algebraic extension $k/\mathbb{Q}$ and let $\pi:X\rightarrow k$  be a smooth, proper, and geometrically connected scheme. Let $\mathcal{E}$ be a finite rank locally free sheaf on $X$ and $\varphi:\mathbb{P}(\mathcal{E})\rightarrow k$ be the structure map of the associated projective bundle. Then $R^2\pi_*\mathcal{K}^M_{2,X}$ is effectively pro-representable if and only if $R^2\varphi_* \mathcal{K}^M_{2,\mathbb{P}(\mathcal{E})}$ is effectively pro-representable.
	
	The above statement also holds replacing all Zariski sheaves with their \'etale sheafifications.
\end{lem}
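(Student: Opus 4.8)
The plan is to reduce the assertion to the projective bundle formula for $K$-cohomology together with the (effective) pro-representability of the Picard functor. Write $r$ for the rank of $\mathcal{E}$, so that $\varphi:\mathbb{P}(\mathcal{E})\rightarrow k$ has relative dimension $r-1$ and carries the class $c_1(\mathcal{O}(1))\in\mathrm{H}^1(\mathbb{P}(\mathcal{E}),\mathcal{K}^M_{1,\mathbb{P}(\mathcal{E})})$. Using the relative $K$-homology of the appendix, which supplies a Gersten-type resolution and the resulting products over the bases $\mathrm{Spec}(A)$ and $\mathrm{Spec}(R)$ appearing in Remarks~\ref{rmk:local} and~\ref{rmk: gerstform}, I would first establish a projective bundle formula
\[\mathrm{H}^p(\mathbb{P}(\mathcal{E})_T,\mathcal{K}^M_{q})\;\cong\;\bigoplus_{i=0}^{r-1}\mathrm{H}^{p-i}(X_T,\mathcal{K}^M_{q-i}),\]
natural in the test scheme $T$, the $i$-th summand being $\alpha\mapsto \varphi_T^*(\alpha)\cup c_1(\mathcal{O}(1))^i$. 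Specializing to $p=q=2$ and using $\mathcal{K}^M_1=\mathbb{G}_m$, $\mathcal{K}^M_0=\mathbb{Z}$, together with the vanishing of $\mathrm{H}^{p-i}(-,\mathcal{K}^M_{q-i})$ once $q-i<0$, this collapses to a natural isomorphism of Zariski sheaves on $\mathsf{Sch}/k$
\[R^2\varphi_*\mathcal{K}^M_{2,\mathbb{P}(\mathcal{E})}\;\cong\;R^2\pi_*\mathcal{K}^M_{2,X}\,\oplus\,R^1\pi_*\mathcal{K}^M_{1,X}\,\oplus\,R^0\pi_*\mathcal{K}^M_{0,X}\]
(the last summand being absent when $r\le 2$), which is compatible with restriction to the chosen $L$-point.

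Next I would show that each of the two extra summands is effectively pro-representable, independently of $X$. Since $X$ is proper and geometrically connected, $X_T$ is connected for every local test scheme $T$, so $R^0\pi_*\mathcal{K}^M_{0,X}(T)=\mathrm{H}^0(X_T,\mathbb{Z})=\mathbb{Z}$ and the reduction to the $L$-fibre is an isomorphism; hence $\mathrm{Def}(R^0\pi_*\mathcal{K}^M_{0,X})=0$ and this factor is trivially (effectively) pro-representable. For the middle summand, $R^1\pi_*\mathcal{K}^M_{1,X}=\mathrm{Pic}_{X/k,(Zar)}$ by Remark~\ref{rmk:local}, and its deformation functor at the trivial bundle is governed by $\mathrm{H}^1(X_L,\mathcal{O}_{X_L})$; it is pro-represented by the formal completion of $\mathrm{Pic}_{X/k}$ at the origin, and this formal deformation is effective (indeed algebraizable) because the Picard scheme of a smooth, proper, geometrically connected $k$-variety exists as a group scheme locally of finite type over $k$.

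Finally I would deduce the equivalence from the behaviour of effective pro-representability under finite direct sums. Taking kernels of the reduction maps in the second display yields a natural isomorphism $\mathrm{Def}(R^2\varphi_*\mathcal{K}^M_{2})\cong \mathrm{Def}(R^2\pi_*\mathcal{K}^M_{2})\oplus\mathrm{Def}(R^1\pi_*\mathcal{K}^M_{1})\oplus\mathrm{Def}(R^0\pi_*\mathcal{K}^M_{0})$, valid over the Artin rings $A$ and over the power series ring $R$. If $F\cong G\oplus H$ compatibly in this way, then $F$ is pro-representable exactly when $G$ and $H$ are, with representing ring the completed tensor product $R_G\,\widehat{\otimes}_L\,R_H$; and, writing $\widehat{u}$ for the universal formal deformation, the component of $\widehat{u}_F$ in $\overline{\mathrm{Def}}(G)(R_F)$ lifts to $\mathrm{Def}(G)(R_F)$ if and only if $\widehat{u}_G$ lifts to $\mathrm{Def}(G)(R_G)$ — one direction by Remark~\ref{rmk:changingrings}, the other by applying the retraction $R_F\rightarrow R_G$ induced by $R_H\rightarrow L$. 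Hence $F$ is effectively pro-representable if and only if both $G$ and $H$ are. Since the $R^1$- and $R^0$-summands are effectively pro-representable by the previous paragraph, $R^2\varphi_*\mathcal{K}^M_{2,\mathbb{P}(\mathcal{E})}$ is effectively pro-representable if and only if $R^2\pi_*\mathcal{K}^M_{2,X}$ is. The \'etale case is identical: the projective bundle formula sheafifies, $(R^0\pi_*\mathcal{K}^M_{0,X})_{\acute{e}t}$ is the constant sheaf $\mathbb{Z}$ with vanishing deformations, and $(R^1\pi_*\mathcal{K}^M_{1,X})_{\acute{e}t}$ is the representable Picard functor.

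I expect the main obstacle to be the first step: proving the projective bundle formula for $\mathcal{K}^M$-cohomology \emph{relatively} over the nonreduced base $\mathrm{Spec}(A)$ and the formal base $\mathrm{Spec}(R)$, with enough naturality in $T$ to be compatible with the reduction maps defining the deformation functors. Over a field this is classical, but here $X_A$ is not regular and the Gersten resolution is unavailable on the total space, so one must argue relatively — which is exactly the role of the relative $K$-homology of the appendix (Remark~\ref{rmk: gerstform} guarantees regularity, hence Gersten, precisely for the power series base $R$ used to test effectivity). Checking that the product splitting commutes with base change along maps $A'\rightarrow A$ is the point requiring genuine care; granting that naturality, the remaining steps are formal.
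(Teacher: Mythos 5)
Your overall strategy --- split $R^2\varphi_*\mathcal{K}^M_{2,\mathbb{P}(\mathcal{E})}$ into a copy of $R^2\pi_*\mathcal{K}^M_{2,X}$ plus Picard-type factors, check the extra factors have effective deformation theory, and conclude by a formal direct-sum argument --- is the same as the paper's, but your first step contains a genuine gap, and it sits exactly where you flag it. The natural isomorphism of sheaves
\[R^2\varphi_*\mathcal{K}^M_{2,\mathbb{P}(\mathcal{E})}\;\cong\;R^2\pi_*\mathcal{K}^M_{2,X}\,\oplus\,R^1\pi_*\mathcal{K}^M_{1,X}\,\oplus\,R^0\pi_*\mathcal{K}^M_{0,X}\]
requires the weight-shifted projective bundle formula $\mathrm{H}^p(\mathbb{P}(\mathcal{E})_T,\mathcal{K}^M_q)\cong\bigoplus_i\mathrm{H}^{p-i}(X_T,\mathcal{K}^M_{q-i})$ over artinian test rings $T=\mathrm{Spec}(A)$, and the tool you propose for this --- the relative $K$-homology of the appendix --- cannot deliver it there. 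Any Gersten-style complex is built out of Milnor $K$-groups of residue fields of points, so it literally cannot distinguish $X_A$ from $X$: they have the same points and the same residue fields when $A$ is artinian local. In particular such complexes see nothing of $\ker\left(\mathrm{H}^2(X_A,\mathcal{K}^M_{2,X_A})\rightarrow\mathrm{H}^2(X,\mathcal{K}^M_{2,X})\right)$, which is the entire content of the deformation functor. This failure of Gersten on non-reduced schemes is the central difficulty of the whole subject, not a naturality issue one can patch. Note also that on non-regular schemes the projective bundle formula that \emph{is} available (at the level of $K$-theory spectra) has all summands in the same weight, $K_q(\mathbb{P}^{r-1}_B)\cong K_q(B)^{\oplus r}$; the weight-shifted form you need is itself a regularity phenomenon.

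The paper's proof never proves a projective bundle formula over artinian bases at all, and this is the key idea your proposal is missing. Since effective pro-representability presupposes pro-representability, and pro-representability transfers between $X$ and $\mathbb{P}(\mathcal{E})$ by Bloch's cohomological criterion (as $\mathrm{H}^i(\mathbb{P}(\mathcal{E}),\mathcal{O})\cong\mathrm{H}^i(X,\mathcal{O})$), the paper may invoke Bloch's theorem to identify the artinian-level deformation functors with $\mathrm{H}^2(-,\Omega^1)\otimes_k\mathfrak{m}_A$; the splitting there is the classical Hodge-cohomology projective bundle formula, no $K$-theoretic input needed. The genuine $K$-theoretic projective bundle formula (via Remark \ref{rmk: gerstform} and the identification with Chow groups, i.e.\ Corollary \ref{cor: gerst}) is used only over $S=\mathrm{Spec}(k[[t_1,\dots,t_r]])$, where the total space $X_S$ \emph{is} regular; this splits the group of actual $R$-deformations. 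The equivalence of effectivity then comes from a ladder of short exact sequences comparing actual with formal deformations, whose Picard column is an isomorphism by Grothendieck's existence theorem (your treatment of that column, via existence of the Picard scheme, is fine and plays the same role). If you tried to rescue your cleaner sheaf-level decomposition, you would have to redo Bloch's relative computation for $\mathbb{P}(\mathcal{E})$ over artinian rings, at which point you have reproduced the paper's argument; as written, the step you call the ``main obstacle'' is precisely the one your toolkit cannot cross.
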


\begin{proof}
	It suffices to work only over the base field $k$, noting that $k/\mathbb{Q}$ is arbitrary. Let $R=k[[t_1,...,t_r]]$ be a power series ring in finitely many variables with maximal ideal $\mathfrak{m}_R$. Set $S=\mathrm{Spec}(R)$ and $S_t=\mathrm{Spec}(R/\mathfrak{m}_R^t)$. Then, by Remark \ref{rmk: gerstform}, the Gersten conjecture holds for the local rings of both $X_S$ and $\mathbb{P}(\mathcal{E})_S$ so that \[\HH^2(X_S, \mathcal{K}^M_{2,X_S})\cong \mathrm{CH}^2(X_S)\quad \mbox{and} \quad \HH^2(\mathbb{P}(\mathcal{E})_S, \mathcal{K}^M_{2,\mathbb{P}(\mathcal{E})_S})\cong \mathrm{CH}^2(\mathbb{P}(\mathcal{E})_S)\] where for any smooth and geometrically connected $k$-scheme $Y$, we write $\CH^2(Y_S)$ for the (relative) Chow group of codimension-2 cycles on $Y_S$ modulo rational equivalence (cf.\ \cite[Chapter 20]{MR1644323}).
	
	Now suppose that $R^2\pi_*\mathcal{K}^M_{2,X}$ is pro-representable. By \cite[Theorem (0.2)]{MR371891}, there is a natural in $(A,\mathfrak{m}_A)$ isomorphism $\mathrm{Def}(R^2\pi_*\mathcal{K}^M_{2,X})(A)\cong \HH^2(X,\Omega_X^1)\otimes_k \mathfrak{m}_A$ for any artinian local $k$-algebra $(A,\mathfrak{m}_A)$. Thus there is a commutative diagram as below (using Remark \ref{rmk:local}).
	\begin{equation}\begin{tikzcd}[column sep = small] 0 \arrow{r} & \mathrm{Def}(R^2\pi_*\mathcal{K}^M_{2,X})(R) \arrow{r}\arrow{d} & \CH^2(X_S)\arrow{r}\arrow{d} & \CH^2(X)\arrow[equals]{d}\arrow{r} & 0  \\
			0\arrow{r} & \HH^2(X,\Omega_{X}^1)\otimes_k \mathfrak{m}_R \arrow{r} & \varprojlim_t \HH^2(X_{S_t}, \mathcal{K}^M_{2,X_{S_t}})\arrow{r} & \HH^2(X,\mathcal{K}^M_{2,X})\arrow{r} & 0
		\end{tikzcd}
	\end{equation}
	Similarly, there is a commutative diagram for $\mathbb{P}(\mathcal{E})$ as so:
	
\begin{equation}\begin{tikzcd}[column sep = small] 0\arrow{r} &\mathrm{Def}(R^2\varphi_*\mathcal{K}^M_{2,\mathbb{P}(\mathcal{E})})(R) \arrow{r}\arrow{d} & \CH^2(\mathbb{P}(\mathcal{E})_S)\arrow{r}\arrow{d} & \CH^2(\mathbb{P}(\mathcal{E}))\arrow[equals]{d}\arrow{r} & 0  \\
				0\arrow{r} & \HH^2(\mathbb{P}(\mathcal{E}),\Omega_{\mathbb{P}(\mathcal{E})}^1)\otimes_k \mathfrak{m}_R \arrow{r} &\varprojlim_t \HH^2(\mathbb{P}(\mathcal{E})_{S_t}, \mathcal{K}^M_{2,\mathbb{P}(\mathcal{E})_{S_t}})\arrow{r} & \HH^2(\mathbb{P}(\mathcal{E}),\mathcal{K}^M_{2,\mathbb{P}(\mathcal{E})}) \arrow{r} & 0.
		\end{tikzcd}\end{equation}
	
	The bottom rows in the diagrams above are canonically right-split by the pull-back along the structure maps for $X$ and $\mathbb{P}(\mathcal{E})$ respectively. Using the projection map $\mathbb{P}(\mathcal{E})\rightarrow X$, the two diagrams above can be compared (via pull-back) and this comparison respects these splittings. If $\mathcal{E}$ has $\mathrm{rk}(\mathcal{E})=1$, then the comparison is an isomorphism everywhere. Otherwise, if $\mathrm{rk}(\mathcal{E})>1$, then there is a diagram of corresponding cokernels:
	\begin{equation}\begin{tikzcd}[column sep = small] 0\arrow{r} &\mathrm{Def}(\mathrm{Pic}_{X/k,(Zar)})(R) \arrow{r}\arrow{d} & \mathrm{Pic}(X_S) \oplus A\arrow{r}\arrow{d} & \mathrm{Pic}(X)\oplus A\arrow[equals]{d}\arrow{r} & 0  \\
				0\arrow{r} & \HH^1(X,\mathcal{O}_X)\otimes_k \mathfrak{m}_R \arrow{r} &\varprojlim_t \HH^1(X_{S_t}, \mathcal{K}^M_{1,X_{S_t}})\arrow{r} & \HH^1(X,\mathcal{K}^M_{1,X}) \arrow{r} & 0.
		\end{tikzcd}\end{equation}
Here the nonzero column on the right (and the middle object in the top row) can be identified with use of the projective bundle formula (and we have $A=0$ if $\mathrm{rank}(\mathcal{E})=2$ and $A=\mathbb{Z}$ if $\mathrm{rank}(\mathcal{E})\geq 3$); the identification of the top-left object follows from this. The identification of the bottom-left object seems to be well-known, and the middle term in the bottom row can be identified using these facts together with the splittings of the bottom rows of the two previous diagrams.
	
	Altogether, this gives a commutative ladder with exact rows:
	\begin{equation}\begin{tikzcd}[column sep = small] 0\arrow{r} & \mathrm{Def}(R^2\pi_*\mathcal{K}^M_{2,X})(R)\arrow{r}\arrow{d} & \mathrm{Def}(R^2\varphi_*\mathcal{K}^M_{2,\mathbb{P}(\mathcal{E})})(R)\arrow{r}\arrow{d} &\mathrm{Def}(\mathrm{Pic}_{X/k,(Zar)})(R)\arrow{r}\arrow{d} & 0  \\
				0\arrow{r} &  \HH^2(X,\Omega_{X}^1)\otimes_k \mathfrak{m}_R\arrow{r} & \HH^2(\mathbb{P}(\mathcal{E}),\Omega_{\mathbb{P}(\mathcal{E})}^1)\otimes_k \mathfrak{m}_R\arrow{r} & \HH^1(X,\mathcal{O}_X)\otimes_k \mathfrak{m}_R \arrow{r} & 0.
		\end{tikzcd}\end{equation}
	Here the rightmost vertical arrow is an isomorphism by Grothendieck's existence theorem, cf.\ \cite[\href{https://stacks.math.columbia.edu/tag/089N}{Tag 089N}]{stacks-project}. Therefore, if either the left or the middle vertical arrow is a surjection, then the other is as well. By varying the power series ring $R$, and by using Remark \ref{rmk:changingrings}, it follows that $R^2\pi_*\mathcal{K}^M_{2,X}$ is effectively pro-representable if and only if $R^2\varphi_*\mathcal{K}^M_{2,\mathbb{P}(\mathcal{E})}$ is effectively pro-representable. The analogous theorem for the \'etale sheafifications $(R^2\pi_*\mathcal{K}^M_{2,X})_{\acute{e}t}$ and $(R^2\varphi_*\mathcal{K}^M_{2,\mathbb{P}(\mathcal{E})})_{\acute{e}t}$ is proved similarly noting that, in each of the above diagrams, all splittings descend to Galois invariants.
\end{proof}

\begin{lem}\label{lem: blowup}
	Fix an algebraic extension $k/\mathbb{Q}$ and let $\pi:X\rightarrow k$  be a smooth, proper, and geometrically connected scheme. Let $Z\subset X$ be a smooth subscheme of $X$ and let $\varphi:\mathrm{Bl}_Z(X)\rightarrow k$ be the structure map of the blow-up of $X$ along $Z$. Then $R^2\pi_{*}\mathcal{K}^M_{2,X}$ is effectively pro-representable if and only if $R^2\varphi_* \mathcal{K}^M_{2,\mathrm{Bl}_Z(X)}$ effectively pro-representable.
	
	The above statement also holds replacing all Zariski sheaves with their \'etale sheafifications. 
\end{lem}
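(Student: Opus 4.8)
The plan is to reproduce the argument of Lemma \ref{lem: probunlem} almost verbatim, substituting the blow-up decompositions of Chow groups and of Hodge cohomology for the projective bundle formula. Write $\tilde{X}=\mathrm{Bl}_Z(X)$ and $p\colon\tilde X\to X$ for the blow-up morphism. First I would record that $\tilde X$ is again smooth, proper, and geometrically connected over $k$: the blow-up of a smooth variety along a smooth center is smooth, the morphism $p$ is projective (so $\varphi=\pi\circ p$ is proper), and $p$ is birational, so geometric connectedness passes from $X$ to $\tilde X$. Hence Remark \ref{rmk: gerstform} applies to $\tilde X$ exactly as to $X$, and for $R=k[[t_1,\ldots,t_r]]$, $S=\Spec(R)$ and $S_t=\Spec(R/\mathfrak{m}_R^t)$ the Gersten conjecture gives $\HH^2(\tilde X_S,\mathcal{K}^M_{2,\tilde X_S})\cong\CH^2(\tilde X_S)$. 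Since blowing up commutes with the flat base change along $S\to\Spec(k)$, one has $\tilde X_S=\mathrm{Bl}_{Z_S}(X_S)$, with exceptional divisor the projectivized normal bundle over $Z_S$.

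Assuming $R^2\pi_*\mathcal{K}^M_{2,X}$ is pro-representable, Bloch's criterion gives $\HH^2(X,\mathcal{O}_X)=\HH^3(X,\mathcal{O}_X)=0$; because the Hodge numbers $h^{0,q}$ are stable birational invariants, the same vanishing holds for $\tilde X$, so $R^2\varphi_*\mathcal{K}^M_{2,\tilde X}$ is pro-representable as well and Bloch's description of the deformation functor applies to both. I would then form the two commutative diagrams exactly as in Lemma \ref{lem: probunlem}, one for $X$ and one for $\tilde X$, each with top row the split short exact sequence $0\to\mathrm{Def}(\cdots)(R)\to\CH^2((\cdot)_S)\to\CH^2(\cdot)\to 0$ of relative Chow groups and bottom row the split exact sequence coming from Bloch's isomorphism, the splittings being pullback along the structure maps. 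The two diagrams are then compared by $p^*$.

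The input that replaces the projective bundle formula is the pair of blow-up decompositions. On the Chow side, for $Z$ smooth of codimension $c$ one has $\CH^2(\tilde X_S)\cong\CH^2(X_S)\oplus\CH^1(Z_S)\oplus A$, where $A=0$ if $c=2$ and $A=\CH^0(Z_S)$ (a discrete, rigid group) if $c\geq 3$. On the Hodge side, the blow-up formula $\HH^q(\tilde X,\Omega^p_{\tilde X})\cong\HH^q(X,\Omega^p_X)\oplus\bigoplus_{i=1}^{c-1}\HH^{q-i}(Z,\Omega^{p-i}_Z)$ specializes at $(p,q)=(1,2)$ to $\HH^2(\tilde X,\Omega^1_{\tilde X})\cong\HH^2(X,\Omega^1_X)\oplus\HH^1(Z,\mathcal{O}_Z)$, only the $i=1$ term surviving. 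Taking cokernels of the $p^*$-comparison, the discrete part $A$ cancels exactly as in the projective bundle case, and the remaining cokernel diagram is the deformation diagram of the Picard functor of $Z$:
\begin{equation}\begin{tikzcd}[column sep = small]
0\arrow{r} & \mathrm{Def}(\mathrm{Pic}_{Z/k,(Zar)})(R)\arrow{r}\arrow{d} & \mathrm{Pic}(Z_S)\oplus A\arrow{r}\arrow{d} & \mathrm{Pic}(Z)\oplus A\arrow[equals]{d}\arrow{r} & 0 \\
0\arrow{r} & \HH^1(Z,\mathcal{O}_Z)\otimes_k\mathfrak{m}_R\arrow{r} & \varprojlim_t\HH^1(Z_{S_t},\mathcal{K}^M_{1,Z_{S_t}})\arrow{r} & \HH^1(Z,\mathcal{K}^M_{1,Z})\arrow{r} & 0.
\end{tikzcd}\end{equation}
This produces the final commutative ladder with exact rows whose right-hand column is the map $\mathrm{Def}(\mathrm{Pic}_{Z/k,(Zar)})(R)\to\HH^1(Z,\mathcal{O}_Z)\otimes_k\mathfrak{m}_R$, an isomorphism by Grothendieck's existence theorem applied to the smooth proper $Z$, cf.\ \cite[\href{https://stacks.math.columbia.edu/tag/089N}{Tag 089N}]{stacks-project}. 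Hence surjectivity of the left vertical arrow is equivalent to surjectivity of the middle one; varying $R$ and invoking Remark \ref{rmk:changingrings} then gives that $R^2\pi_*\mathcal{K}^M_{2,X}$ is effectively pro-representable if and only if $R^2\varphi_*\mathcal{K}^M_{2,\tilde X}$ is. The \'etale version follows identically, all splittings descending to Galois invariants.

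The main obstacle I anticipate is not the formal diagram chase but establishing the two blow-up formulas in the \emph{relative} setting over $S$ and checking that $p^*$ matches them compatibly. The Chow, equivalently $K$-cohomology, decomposition must hold for the relative groups $\CH^2((\cdot)_S)$ over the regular base $S$ --- this is exactly the kind of statement the relative $K$-homology of the appendix is designed to supply, via the projectivized normal bundle description of the exceptional divisor --- and one must verify that Bloch's isomorphism is natural enough that the $K$-theoretic splitting is carried onto the Hodge-theoretic splitting, so that the cokernel computation genuinely reduces to the Picard functor of $Z$ rather than to some a priori larger quotient.
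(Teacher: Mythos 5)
Your proposal is correct and follows essentially the same route as the paper: compare the two Def-sequences via the blow-up decompositions of relative Chow groups over $S$ and of Hodge cohomology, identify the cokernel with the deformation functor of $\mathrm{Pic}_{Z/k}$ (the discrete $\CH^0(Z_S)$, resp.\ $\mathbb{Z}$, factor cancelling), and use Grothendieck existence to make the rightmost vertical arrow an isomorphism. The only cosmetic difference is that the paper phrases the Chow-side input as the exact sequence $0\to\CH^2(X_S)\to\CH^2(\mathrm{Bl}_{Z_S}(X_S))\to\mathrm{Pic}(\mathbb{P}(\mathcal{N}_{Z_S/X_S}))/A_S\to 0$ obtained from Fulton's Proposition 6.7(e) together with Remark \ref{rmk: gerstform} and flat base change of blow-ups (rather than from the appendix's relative $K$-homology, which the paper reserves for the $i=1$ case), and it leaves the transfer of pro-representability implicit where you spell it out via Bloch's criterion.
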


\begin{proof}
It suffices to assume that $Z$ is smooth and connected with $\mathrm{codim}(Z,X)\geq 2$. As before, let $R=k[[t_1,...,t_r]]$ be a power series ring in finitely many variables with maximal ideal $\mathfrak{m}_R$. Set $S=\mathrm{Spec}(R)$. We write $\mathcal{N}_{Z/X}$ for the normal sheaf on $Z$ for the inclusion $Z\subset X$ and we write $\Theta:\mathbb{P}(\mathcal{N}_{Z/X})\rightarrow X$ for the associated projective bundle map. There's an isomorphism $\mathrm{Pic}(\mathbb{P}(\mathcal{N}_{Z/X}))\cong \Theta^*\mathrm{Pic}(Z)\oplus \mathbb{Z}c_1(\mathcal{O}_{\mathbb{P}(\mathcal{N}_{Z/X})}(1))$. By the blow-up formula for regular embeddings \cite[Proposition 6.7 (e)]{MR1644323} there is an exact sequence \[0\rightarrow \CH^2(X)\rightarrow \CH^2(\mathrm{Bl}_Z(X))\rightarrow \mathrm{Pic}(\mathbb{P}(\mathcal{N}_{Z/X}))/A\rightarrow 0\] where $A=0$ if $\mathrm{codim}(Z,X)>2$ or, if $\mathrm{codim}(Z,X)=2$, then $A$ is the infinite cyclic subgroup generated by $\Theta^*c_1(\mathcal{N}_{Z/X})+c_1(\mathcal{O}_{\mathbb{P}(\mathcal{N}_{Z/X})}(1))$. Since Remark \ref{rmk: gerstform} applies to the triple $(X_S, Z_S, \mathrm{Bl}_{Z_S}(X_S))$ and, due to \cite[\href{https://stacks.math.columbia.edu/tag/0E9J}{Tag 0E9J}]{stacks-project}, the blow-up formula also provides an exact sequence \[0\rightarrow \CH^2(X_S)\rightarrow \CH^2(\mathrm{Bl}_{Z_S}(X_S))\rightarrow \mathrm{Pic}(\mathbb{P}(\mathcal{N}_{Z_S/X_S}))/A_S\rightarrow 0\] with $A_S$ characterized by the codimension of $Z\subset X$ similarly.
	
If $\mathrm{codim}(Z,X)=2$, then pulling back induces isomorphisms \[ \mathrm{Pic}(Z)\cong \mathrm{Pic}(\mathbb{P}(\mathcal{N}_{Z/X}))/A \quad \mbox{and}\quad \mathrm{Pic}(Z_S)\cong  \mathrm{Pic}(\mathbb{P}(\mathcal{N}_{Z_S/X_S}))/A_S.\] Now, regardless of the codimension of $Z$ in $X$, the above sequences on Chow groups produce an exact sequence 
	\begin{equation}\begin{tikzcd}[column sep = small] 0\arrow{r} & \mathrm{Def}(R^2\pi_*\mathcal{K}^M_{2,X})(R)\arrow{r} & \mathrm{Def}(R^2\varphi_*\mathcal{K}^M_{2,\mathrm{Bl}_Z(X)})(R)\arrow{r} &\mathrm{Def}(\mathrm{Pic}_{Z/k,(Zar)})(R)\arrow{r} & 0.
		\end{tikzcd}\end{equation} If either of $R^2\pi_*\mathcal{K}^M_{2,X}$ or $R^2\varphi_*\mathcal{K}^M_{2,\mathrm{Bl}_Z(X)}$ are pro-representable, then pulling back along the map $\mathrm{Bl}_Z(X)\rightarrow X$ induces an exact commutative ladder
	\begin{equation}\begin{tikzcd}[column sep = small] 0\arrow{r} & \mathrm{Def}(R^2\pi_*\mathcal{K}^M_{2,X})(R)\arrow{r}\arrow{d} & \mathrm{Def}(R^2\varphi_*\mathcal{K}^M_{2,\mathrm{Bl}_Z(X)})(R)\arrow{r}\arrow{d} &\mathrm{Def}(\mathrm{Pic}_{Z/k,(Zar)})(R)\arrow{r}\arrow{d} & 0\\
				0\arrow{r} & \HH^2(X,\Omega^1_X)\otimes_k \mathfrak{m}_R\arrow{r} & \HH^2(\mathrm{Bl}_Z(X),\Omega^1_{\mathrm{Bl}_Z(X)})\otimes_k \mathfrak{m}_R\arrow{r} & \HH^1(Z,\mathcal{O}_Z)\otimes_k \mathfrak{m}_R\arrow{r} & 0.
		\end{tikzcd}\end{equation}
	The rightmost vertical arrow is an isomorphism. Hence, if either of the left or the middle vertical arrows were surjective, then the other would be as well. This allows us to conclude as before.
\end{proof}

	\begin{thm}\label{thm: birational}
	Fix an algebraic extension $k/\mathbb{Q}$ and let $\pi_X:X\rightarrow k$ and $\pi_Y:Y\rightarrow k$ be two smooth, proper, and geometrically connected $k$-schemes. Suppose that $X$ and $Y$ are stably birational over $k$.
	
	Then $(R^2\pi_{X*}\mathcal{K}^M_{2,X})_\tau$ is algebraizable if and only if $(R^2\pi_{Y*}\mathcal{K}^M_{2,Y})_\tau$ algebraizable for either $\tau=\mathrm{Zar}, \acute{e}t$.
\end{thm}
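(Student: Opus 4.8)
The plan is to reduce algebraizability to effective pro-representability and then to transport the latter property from $X$ to $Y$ across a weak factorization, using the two lemmas already established. First I would observe that, for the functors in question, algebraizability and effective pro-representability are equivalent: the implication from algebraizability to effective pro-representability is built into the definitions (one only speaks of algebraizing a functor that is already effectively pro-representable), while the converse is precisely Artin's Algebraization theorem as recalled in Section~\ref{sec:back}. This applies to $R^2\pi_{X*}\mathcal{K}^M_{2,X}$ and to its \'etale sheafification alike, since both are locally of finite presentation. It therefore suffices to prove that effective pro-representability is a stable birational invariant for these functors.

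Next I would unwind stable birational equivalence. By definition there are integers $m,n\ge 0$ together with a birational map $X\times_k\mathbb{P}^m_k \dashrightarrow Y\times_k\mathbb{P}^n_k$. Both products are smooth, proper, and geometrically connected, and each is a trivial projective bundle, namely $X\times_k\mathbb{P}^m_k=\mathbb{P}(\mathcal{O}_X^{\oplus(m+1)})$ and $Y\times_k\mathbb{P}^n_k=\mathbb{P}(\mathcal{O}_Y^{\oplus(n+1)})$. Lemma~\ref{lem: probunlem}, applied to these bundles, shows that effective pro-representability for $X$ is equivalent to that for $X\times_k\mathbb{P}^m_k$, and similarly for $Y$. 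This reduces the theorem to the claim that effective pro-representability is preserved under the birational map relating the two smooth, proper, geometrically connected varieties $X\times_k\mathbb{P}^m_k$ and $Y\times_k\mathbb{P}^n_k$.

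For the last step I would invoke the weak factorization theorem. Since $k$ is algebraic over $\mathbb{Q}$ and hence of characteristic zero, any birational map between smooth proper $k$-varieties factors as a finite chain $X\times_k\mathbb{P}^m_k=W_0\dashrightarrow W_1\dashrightarrow\cdots\dashrightarrow W_\ell=Y\times_k\mathbb{P}^n_k$ in which each step is the blow-up or blow-down of a smooth center of codimension at least two, and in which every $W_j$ is smooth and proper. Each elementary step is a blow-up morphism in one direction or the other, and geometric connectedness is preserved both ways (a blow-up of a geometrically connected variety is geometrically connected, and a variety admitting a proper surjection from a geometrically connected variety is itself geometrically connected); hence every $W_j$ is geometrically connected and satisfies the hypotheses of Lemma~\ref{lem: blowup}. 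Applying that lemma at each step---its ``if and only if'' form treating blow-ups and blow-downs symmetrically---propagates effective pro-representability along the whole chain, yielding the equivalence between $W_0$ and $W_\ell$, and hence between $X$ and $Y$. The \'etale case is identical, using the \'etale versions of both lemmas and of the Artin reduction.

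The main obstacle is the bookkeeping around weak factorization rather than any new computation: one must verify that every intermediate $W_j$ remains smooth, proper, and geometrically connected so that Lemma~\ref{lem: blowup} genuinely applies, while noting that the lemma itself absorbs the reduction of a general smooth center to its connected codimension-$\ge 2$ components. The one external input the argument truly rests on is a form of weak factorization valid over the possibly non-algebraically-closed field $k$; this is available because $k/\mathbb{Q}$ is algebraic, so that every variety in sight descends to a number field, but it is the point at which care is most needed.
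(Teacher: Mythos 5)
Your proposal is correct and follows essentially the same route as the paper's own proof: reduce algebraizability to effective pro-representability via Artin's theorem (Section~\ref{sec:back}), pass from $X$ and $Y$ to $X\times\mathbb{P}^m$ and $Y\times\mathbb{P}^n$ using Lemma~\ref{lem: probunlem}, and transport effective pro-representability across a weak factorization chain using Lemma~\ref{lem: blowup}. Your additional bookkeeping (geometric connectedness of the intermediate varieties, applicability of weak factorization over the non-closed field $k$) only makes explicit what the paper leaves implicit.
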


\begin{proof}
	This follows immediately from the content of Section \ref{sec:back}, Lemma \ref{lem: probunlem}, Lemma \ref{lem: blowup}, and the Weak Factorization theorem over $k$, \cite[Theorem 0.0.1 (1)]{MR2483958}. Namely, suppose $(R^2\pi_{X*}\mathcal{K}^M_{2,X})_\tau$ is algebraizable and let $\varphi_X:X\times \mathbb{P}^r\rightarrow k$ and $\varphi_Y:Y\times \mathbb{P}^s\rightarrow k$ be birationally equivalent $k$-schemes for some $r,s\geq 0$. 
	
	Then the higher push forward functor $(R^2\varphi_{X*}\mathcal{K}^M_{2,X\times \mathbb{P}^r})_\tau$ is effectively pro-representable by of Lemma \ref{lem: probunlem} and so algebraizable. Any birational equivalence between the two schemes $X\times \mathbb{P}^r$ and $Y\times \mathbb{P}^s$ can be factored into a sequence of blow-ups and blow-downs at smooth centers, by the Weak Factorization theorem, so that this implies $(R^2\varphi_{Y*}\mathcal{K}^M_{2,Y\times \mathbb{P}^s})_\tau$ is then effectively pro-representable by Lemma \ref{lem: blowup} and hence algebraizable. We can then conclude that $(R^2\pi_{Y*}\mathcal{K}^M_{2,Y})_\tau$ is effectively pro-representable, and hence algebraizable, by use of Lemma \ref{lem: probunlem} again.
\end{proof}

\begin{exmp}
Let $k$ be an algebraic extension of $\mathbb{Q}$ and suppose that $\pi:X\rightarrow k$ is a smooth, proper, and geometrically connected surface with geometric genus $p_g(X)=\dim\HH^0(X,\Omega^2_X)=0$. Then both $R^2\pi_*\mathcal{K}^M_{2,X}$ and $(R^2\pi_*\mathcal{K}^M_{2,X})_{\acute{e}t}$ are pro-representable by Bloch's theorem \cite{MR371891}. If, moreover, $\dim\HH^1(X,\mathcal{O}_X)\leq 1$ then this pro-representability is effective. To see this, one notes $\HH^1(X,\mathcal{O}_X)\cong \HH^2(X,\Omega_X^1)$. If the latter vanishes, then effectivity is trivial. Otherwise, it follows from \cite[Theorem 1.1]{MR4112683}.

By Theorem \ref{thm: birational}, any smooth and proper variety $\phi:Y\rightarrow k$ stably birational to such an $X$ therefore has algebraizable $R^2\phi_*\mathcal{K}^M_{2,Y}$ and $(R^2\phi_*\mathcal{K}^M_{2,Y})_{\acute{e}t}$.
\end{exmp}

\section{In the case $i=1$}
The proof that $R^1\pi_*\mathcal{K}^M_{2,X}$ is a stable birational invariant is similar to the case of $R^2\pi_*\mathcal{K}^M_{2,X}$, but relies on a relative version of the $K$-cohomology groups of Rost \cite{MR1418952}. We include all of the necessary set-up for these groups, along with the results about them that we will use here, in an appendix below.

\begin{lem}\label{lem: newrep}
	Let $k$ be a field and let $\pi:X\rightarrow k$ be a proper scheme. Set $A=\HH^0(X,\mathcal{O}_X)$ and $S=\mathrm{Spec}(A)$. Then the functor $\pi_*\mathcal{K}^M_{1,X}$ is representable by the group $k$-scheme Weil restriction $\mathrm{Res}_{S/k}(\mathbb{G}_{m,S})$.
\end{lem}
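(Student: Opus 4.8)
The plan is to prove representability by first unpacking what the functor $\pi_*\mathcal{K}^M_{1,X}$ assigns to a test scheme, and then exhibiting a natural bijection with the functor of points of the Weil restriction. The starting observation is that $\mathcal{K}^M_{1,Y}$ is canonically the sheaf $\mathbb{G}_{m,Y}$ of units $\mathcal{O}_Y^\times$ for any scheme $Y$, so for a $k$-scheme $T$ the group $\pi_*\mathcal{K}^M_{1,X}(T)$ is by definition $\mathrm{H}^0(X_T,\mathcal{O}_{X_T}^\times)=\Gamma(X_T,\mathcal{O}_{X_T})^\times$, the units in the ring of global functions on the base change $X_T=X\times_k T$. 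So the entire statement reduces to identifying the functor $T\mapsto \Gamma(X_T,\mathcal{O}_{X_T})^\times$ with the functor of points of $\mathrm{Res}_{S/k}(\mathbb{G}_{m,S})$, where $S=\mathrm{Spec}(A)$ and $A=\mathrm{H}^0(X,\mathcal{O}_X)$.

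First I would compute $\Gamma(X_T,\mathcal{O}_{X_T})$ itself. The key input is that $\pi$ is proper, which by cohomology and base change (properly, by the formation of $\pi_*\mathcal{O}_X$ commuting with flat base change, and more delicately with arbitrary base change here) should give $\Gamma(X_T,\mathcal{O}_{X_T})\cong A\otimes_k \mathcal{O}_T(T)$ when $T$ is affine, and in general $\pi_*\mathcal{O}_X$ pulled back along $T\to\Spec k$. The cleanest route is to factor $\pi$ through its Stein-type factorization: $A=\mathrm{H}^0(X,\mathcal{O}_X)$ is a finite $k$-algebra since $X$ is proper over the field $k$, giving a canonical map $X\to S=\Spec(A)\to \Spec(k)$ with $\Gamma(X,\mathcal{O}_X)=A=\Gamma(S,\mathcal{O}_S)$. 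The finiteness of $A$ over $k$ is what makes the Weil restriction $\mathrm{Res}_{S/k}$ exist as a scheme. I would then argue that for any $k$-scheme $T$ the natural map $A\otimes_k\mathcal{O}_T\to \Gamma(X_T,\mathcal{O}_{X_T})$ is an isomorphism, so that $\Gamma(X_T,\mathcal{O}_{X_T})\cong\Gamma(S_T,\mathcal{O}_{S_T})$ functorially in $T$.

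Granting that identification, taking units commutes with everything in sight, so $\Gamma(X_T,\mathcal{O}_{X_T})^\times\cong \Gamma(S_T,\mathcal{O}_{S_T})^\times = \mathbb{G}_{m,S}(S_T)=\mathbb{G}_{m,S}(S\times_k T)$. By the defining adjunction of the Weil restriction along the finite (hence affine) morphism $S\to\Spec k$, namely $\mathrm{Res}_{S/k}(\mathbb{G}_{m,S})(T)=\mathbb{G}_{m,S}(S\times_k T)=\mathbb{G}_{m,S}(S_T)$, this last group is exactly $\mathrm{Res}_{S/k}(\mathbb{G}_{m,S})(T)$. Chaining these natural isomorphisms yields $\pi_*\mathcal{K}^M_{1,X}(T)\cong \mathrm{Res}_{S/k}(\mathbb{G}_{m,S})(T)$ naturally in $T$, which is representability by Yoneda. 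One should also confirm the Weil restriction $\mathrm{Res}_{S/k}(\mathbb{G}_{m,S})$ exists and is a group $k$-scheme: this is standard since $S\to\Spec k$ is finite and $\mathbb{G}_{m,S}$ is quasi-projective over $S$.

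The main obstacle is establishing $A\otimes_k\mathcal{O}_T\xrightarrow{\sim}\Gamma(X_T,\mathcal{O}_{X_T})$ for \emph{all} $k$-schemes $T$, not merely flat or reduced ones, since the representing statement is over the big Zariski site and base change of $\pi_*\mathcal{O}_X$ along arbitrary $T$ is exactly where cohomology-and-base-change subtleties live. The honest way to handle this is to reduce to $T$ affine by sheafifying, and for $T=\Spec B$ to invoke the flat base change theorem when $B$ is flat over $k$ (automatic here, as $k$ is a field, so \emph{every} $k$-algebra $B$ is flat over $k$). This flatness over a field is the crucial simplification: it makes $\pi_*\mathcal{O}_X$ commute with every base change, so the potentially delicate step is in fact routine, and the isomorphism $A\otimes_k B\cong\Gamma(X_B,\mathcal{O}_{X_B})$ holds on the nose. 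The remaining care is only in checking naturality and that passing to units and to the Zariski sheafification is compatible with the Weil-restriction adjunction, which is formal.
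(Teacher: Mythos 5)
Your proposal is correct and takes essentially the same approach as the paper: both factor $\pi$ through $S=\Spec(\HH^0(X,\mathcal{O}_X))$, use properness to get that $A$ is a finite $k$-algebra (hence that $\mathrm{Res}_{S/k}(\mathbb{G}_{m,S})$ exists), and use flat base change---with flatness automatic since every $k$-scheme is flat over the field $k$---to identify $\Gamma(X_T,\mathcal{O}_{X_T})^\times$ with $\Gamma(S_T,\mathcal{O}_{S_T})^\times=\mathrm{Res}_{S/k}(\mathbb{G}_{m,S})(T)$ via the Weil-restriction adjunction. The only difference is presentational: the paper applies base change to $\phi\colon X\rightarrow S$ along the flat map $T_S\rightarrow S$ to see that $\mathcal{O}_{T_S}\rightarrow\phi_{T*}\mathcal{O}_{T_X}$ is an isomorphism, whereas you apply it to $\pi$ along $T\rightarrow\Spec(k)$ after reducing to affine $T$.
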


\begin{proof}
	We first remark that, since $A$ is a finite $k$-algebra as $X$ is proper \cite[\href{https://stacks.math.columbia.edu/tag/02O6}{Tag 02O6}]{stacks-project}, the Weil restriction $\mathrm{Res}_{S/k}(\mathbb{G}_{m,S})$ exists as a $k$-scheme, see \cite[7.6, Theorem 4]{MR1045822}. Now there exists a canonically defined factorization of $\pi$ into a composition \[X\xrightarrow{\phi} S \xrightarrow{\rho} k\] and we use this factorization to define a natural transformation \[ \mathrm{Res}_{S/k}(\mathbb{G}_{m,S})\rightarrow \pi_*\mathbb{G}_{m,X}=\pi_*\mathcal{K}_{1,X}^M.\] 
	
	For any scheme $T/k$ one has natural identifications \[\pi_*\mathbb{G}_{m,X}(T)=\mathrm{Hom}_X(T_X,\mathbb{G}_{m,X})=\mathcal{O}_{T_X}(T_X)^\times,\] where $\mathcal{O}_{T_X}(T_X)^\times$ is the group of units of the ring $\mathcal{O}_{T_X}(T_X),$ and
	\[\mathrm{Hom}_k(T,\mathrm{Res}_{S/k}(\mathbb{G}_{m,S}))=\mathrm{Hom}_S(T_S,\mathbb{G}_{m,S})=\mathcal{O}_{T_S}(T_S)^\times.\] We take as our definition of a natural transformation the map \[\pi_*\mathbb{G}_{m,X}(T)\rightarrow \mathrm{Hom}_k(T,\mathrm{Res}_{S/k}(\mathbb{G}_{m,S}))\] coming from taking global sections of the induced map $\mathcal{O}_{T_S}\rightarrow \phi_{T*}\mathcal{O}_{T_X}$. But note that since $X\rightarrow S$ is proper, and $T_S\rightarrow S$ is flat, the morphism $\mathcal{O}_{T_S}\rightarrow \phi_{T*}\mathcal{O}_{T_X}$ is an isomorphism \cite[\href{https://stacks.math.columbia.edu/tag/02KH}{Tag 02KH}]{stacks-project}.
\end{proof}

\begin{lem}\label{lem: probunlem2}
	Fix an algebraic extension $k/\mathbb{Q}$ and let $\pi:X\rightarrow k$  be a smooth, proper, and geometrically connected scheme. Let $\mathcal{E}$ be a finite rank locally free sheaf on $X$ and $\varphi:\mathbb{P}(\mathcal{E})\rightarrow k$ be the structure map of the associated projective bundle. Then $R^1\pi_*\mathcal{K}^M_{2,X}$ is effectively pro-representable if and only if $R^1\varphi_* \mathcal{K}^M_{2,\mathbb{P}(\mathcal{E})}$ is effectively pro-representable.
	
	The above statement also holds replacing all Zariski sheaves with their \'etale sheafifications.
\end{lem}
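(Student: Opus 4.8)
The plan is to follow the proof of Lemma \ref{lem: probunlem} verbatim in its formal skeleton, substituting for the Gersten identification $\HH^2(X_S,\mathcal{K}^M_{2,X_S})\cong\CH^2(X_S)$ the relative Rost-style $K$-cohomology developed in the appendix. As there, it suffices to work over $k$ itself. Fix a power series ring $R=k[[t_1,\dots,t_r]]$ with maximal ideal $\mathfrak{m}_R$, and set $S=\Spec(R)$ and $S_t=\Spec(R/\mathfrak{m}_R^t)$. Bloch's theorem \cite{MR371891}, recalled in Section \ref{sec:back}, gives a natural isomorphism $\mathrm{Def}(R^1\pi_*\mathcal{K}^M_{2,X})(A)\cong\HH^1(X,\Omega^1_X)\otimes_k\mathfrak{m}_A$ for artinian local $A$; combined with Remark \ref{rmk:local}, the formal deformations are computed by $\varprojlim_t\HH^1(X_{S_t},\mathcal{K}^M_{2,X_{S_t}})$, and effective pro-representability amounts to surjectivity of the comparison map from the honest $R$-deformations sitting inside $\HH^1(X_S,\mathcal{K}^M_{2,X_S})$.

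The one genuinely new ingredient is that $S$ and the $S_t$ are not fields, so $\HH^1(X_S,\mathcal{K}^M_{2,X_S})$ is no longer a Chow group and must instead be controlled through the relative $K$-cohomology of the appendix. Granting the projective bundle formula in that setting, I would obtain a splitting $\HH^1(\mathbb{P}(\mathcal{E})_S,\mathcal{K}^M_2)\cong\HH^1(X_S,\mathcal{K}^M_2)\oplus\HH^0(X_S,\mathcal{K}^M_1)$ compatible with restriction to the closed point of $S$ and with pullback along $\mathbb{P}(\mathcal{E})\to X$; only these two summands appear because any further Tate twist would sit in negative cohomological degree. The extra summand is exactly the functor $\pi_*\mathcal{K}^M_{1,X}$ appearing in Lemma \ref{lem: newrep}. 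On the Hodge side, the classical projective bundle formula gives the matching decomposition $\HH^1(\mathbb{P}(\mathcal{E}),\Omega^1_{\mathbb{P}(\mathcal{E})})\cong\HH^1(X,\Omega^1_X)\oplus\HH^0(X,\mathcal{O}_X)$.

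Assembling these exactly as in Lemma \ref{lem: probunlem} produces a commutative ladder with exact (indeed split) rows whose three columns are the deformation-comparison maps for $R^1\pi_*\mathcal{K}^M_{2,X}$, for $R^1\varphi_*\mathcal{K}^M_{2,\mathbb{P}(\mathcal{E})}$, and for $\pi_*\mathcal{K}^M_{1,X}$. By Lemma \ref{lem: newrep}, this last functor is representable by the Weil restriction of $\mathbb{G}_m$ along $\Spec(\HH^0(X,\mathcal{O}_X))\to\Spec(k)$, which is simply $\mathbb{G}_{m,k}$ since $\HH^0(X,\mathcal{O}_X)=k$ for $X$ geometrically connected, smooth and proper; being representable by a smooth group scheme it is effectively pro-representable, so its comparison map is an isomorphism. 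Concretely the multiplicative group $1+\mathfrak{m}_R$ is identified with $\HH^0(X,\mathcal{O}_X)\otimes_k\mathfrak{m}_R$ through the logarithm, which is legitimate because $k$ has characteristic zero. The snake lemma then yields an isomorphism between the cokernel of the left column and that of the middle, so the left vertical arrow is surjective if and only if the middle one is; that is, $R^1\pi_*\mathcal{K}^M_{2,X}$ is effectively pro-representable if and only if $R^1\varphi_*\mathcal{K}^M_{2,\mathbb{P}(\mathcal{E})}$ is. Varying $R$ and applying Remark \ref{rmk:changingrings} concludes the Zariski case, and the étale case follows as in Lemma \ref{lem: probunlem} once one checks that every splitting used descends to Galois invariants.

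I expect the main obstacle to lie entirely on the $K$-cohomology side: establishing the projective bundle formula for $\HH^1(-,\mathcal{K}^M_2)$ over the non-reduced, non-field bases $S$ and $S_t$, and checking that its splitting is natural both under restriction to the closed point and under the pullback $\mathbb{P}(\mathcal{E})\to X$, so that the final ladder has genuinely exact rows. This is precisely what the relative $K$-homology of the appendix is designed to supply, and it is the only place where the argument departs from the already-classical projective bundle formula for Chow groups used when $i=2$. A minor additional point, harmless in characteristic zero, is the identification of the multiplicative deformation group of $\mathbb{G}_m$ with the additive Hodge term $\HH^0(X,\mathcal{O}_X)\otimes_k\mathfrak{m}_R$.
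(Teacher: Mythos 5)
Your proposal is correct and follows essentially the same route as the paper's proof: both run the skeleton of Lemma \ref{lem: probunlem}, replace the Chow-group identification by the appendix's $K$-cohomology via Corollary \ref{cor: gerst}, use the projective bundle formula to get the short exact sequence whose third term is $A^0(X_S;K_1^M)$ (i.e.\ the functor $\pi_*\mathcal{K}^M_{1,X}$), and invoke Lemma \ref{lem: newrep} to see that the rightmost comparison map is an isomorphism, with the \'etale case handled by Galois descent of the splittings. The only cosmetic difference is that the paper explicitly disposes of the rank-one case as trivial, which your remark about vanishing negative-degree summands handles implicitly.
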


\begin{proof}
We write $S=\mathrm{Spec}(R)$ for a power series ring $R=k[[t_1,...,t_r]]$. Then the proof is exactly the same as that of Lemma \ref{lem: probunlem} with the following changes. First, with the notation as in the appendix, there are isomorphisms (cf.\ Corollary \ref{cor: gerst}) \[\HH^1(X_S,\mathcal{K}^M_{2,X_S})\cong A^1(X_S;K_2^M) \quad \mbox{and} \quad \HH^1(\mathbb{P}(\mathcal{E})_S,\mathcal{K}^M_{2,{\mathbb{P}(\mathcal{E})_S}})\cong A^1(\mathbb{P}(\mathcal{E})_S;K_2^M)\] with the given $K$-cohomology groups. Since the lemma is trivial if $\mathrm{rk}(\mathcal{E})=1$, assume $\mathrm{rk}(\mathcal{E})>1$. Then the pullback along the projection $\mathbb{P}(\mathcal{E})_S\rightarrow X_S$ induces a short exact sequence \[0\rightarrow A^1(X_S;K_2^M)\rightarrow A^1(\mathbb{P}(\mathcal{E})_S;K_2^M) \rightarrow  A^0(X_S;K_1^M)\rightarrow 0\] by the projective bundle formula. Similar to before there is then a commutative diagram \begin{equation}\begin{tikzcd}[column sep = small] 0\arrow{r} & \mathrm{Def}(R^1\pi_*\mathcal{K}^M_{2,X})(R)\arrow{r}\arrow{d} & \mathrm{Def}(R^1\varphi_*\mathcal{K}^M_{2,\mathbb{P}(\mathcal{E})})(R)\arrow{r}\arrow{d} &\mathrm{Def}(\pi_*\mathcal{K}^M_{1,X})(R)\arrow{r}\arrow{d} & 0  \\
		0\arrow{r} &  \HH^1(X,\Omega_{X}^1)\otimes_k \mathfrak{m}_R\arrow{r} & \HH^1(\mathbb{P}(\mathcal{E}),\Omega_{\mathbb{P}(\mathcal{E})}^1)\otimes_k \mathfrak{m}_R\arrow{r} & \HH^0(X,\mathcal{O}_X)\otimes_k \mathfrak{m}_R \arrow{r} & 0.
\end{tikzcd}\end{equation} Since $\pi_*\mathcal{K}_{1,X}^M$ is representable by $\mathbb{G}_{m,k}$ by Lemma \ref{lem: newrep}, the rightmost vertical arrow is an isomorphism, and the lemma follows as before.
\end{proof}

\begin{lem}
Fix an algebraic extension $k/\mathbb{Q}$ and let $\pi:X\rightarrow k$  be a smooth, proper, and geometrically connected scheme. Let $Z\subset X$ be a smooth subscheme of $X$ and let $\varphi:\mathrm{Bl}_Z(X)\rightarrow k$ be the structure map of the blow-up of $X$ along $Z$. Then $R^1\pi_{*}\mathcal{K}^M_{2,X}$ is effectively pro-representable if and only if $R^1\varphi_* \mathcal{K}^M_{2,\mathrm{Bl}_Z(X)}$ effectively pro-representable.

The above statement also holds replacing all Zariski sheaves with their \'etale sheafifications. 
\end{lem}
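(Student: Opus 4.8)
The plan is to carry over the proof of Lemma~\ref{lem: blowup} almost verbatim, replacing the codimension-two Chow groups $\CH^2(-)=A^2(-;K_2^M)$ everywhere by the relative $K$-cohomology groups $A^1(-;K_2^M)$ of the appendix, in exactly the way that Lemma~\ref{lem: probunlem2} adapts Lemma~\ref{lem: probunlem} from the case $i=2$ to the case $i=1$. As in Lemma~\ref{lem: blowup} I would first reduce to the case where $Z$ is smooth and connected of codimension $c=\mathrm{codim}(Z,X)\ge 2$, fix a power series ring $R=k[[t_1,\dots,t_r]]$ with $S=\Spec(R)$, and observe that Remark~\ref{rmk: gerstform} applies to the triple $(X_S,Z_S,\mathrm{Bl}_{Z_S}(X_S))$ so that, by Corollary~\ref{cor: gerst}, one has $\HH^1(\mathrm{Bl}_{Z_S}(X_S),\mathcal{K}^M_{2,\mathrm{Bl}_{Z_S}(X_S)})\cong A^1(\mathrm{Bl}_{Z_S}(X_S);K_2^M)$, and likewise for $X_S$ and $Z_S$.

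Next I would invoke the blow-up formula for the cycle module $K_*^M$ over the base $S$ (either taken from the appendix or deduced from the projective bundle formula together with localization, exactly as in \cite[Proposition~6.7]{MR1644323}). The general shape of this formula expresses the exceptional contribution as $\bigoplus_{j=1}^{c-1} A^{1-j}(Z_S;K_{2-j}^M)$; since $A^{1-j}(Z_S;K_{2-j}^M)=0$ for $j\ge 2$ (the codimension index becomes negative), only the $j=1$ summand survives, independently of $c$. This is the essential simplification relative to Lemma~\ref{lem: blowup}: there is no codimension-dependent cyclic correction to quotient out, and one obtains directly the short exact sequence
\[
0\to A^1(X_S;K_2^M)\to A^1(\mathrm{Bl}_{Z_S}(X_S);K_2^M)\to A^0(Z_S;K_1^M)\to 0,
\]
in which the quotient $A^0(Z_S;K_1^M)$ is the relative $K$-cohomology computing $\pi_{Z*}\mathcal{K}^M_{1,Z}$.

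Passing to kernels over the closed point of $S$ via Remark~\ref{rmk:local} then yields an exact sequence of deformation functors
\[
0\to \mathrm{Def}(R^1\pi_*\mathcal{K}^M_{2,X})(R)\to \mathrm{Def}(R^1\varphi_*\mathcal{K}^M_{2,\mathrm{Bl}_Z(X)})(R)\to \mathrm{Def}(\pi_{Z*}\mathcal{K}^M_{1,Z})(R)\to 0.
\]
Assuming either $R^1\pi_*\mathcal{K}^M_{2,X}$ or $R^1\varphi_*\mathcal{K}^M_{2,\mathrm{Bl}_Z(X)}$ is pro-representable, Bloch's identification $\mathrm{Def}(R^1\pi_*\mathcal{K}^M_{2,X})(A)\cong \HH^1(X,\Omega^1_X)\otimes_k\mathfrak{m}_A$ and the Hodge-theoretic blow-up formula (which in bidegree $(1,1)$ again leaves only the lowest summand $\HH^0(Z,\mathcal{O}_Z)$) produce a commutative ladder with exact rows whose bottom row is
\[
0\to \HH^1(X,\Omega^1_X)\otimes_k\mathfrak{m}_R\to \HH^1(\mathrm{Bl}_Z(X),\Omega^1_{\mathrm{Bl}_Z(X)})\otimes_k\mathfrak{m}_R\to \HH^0(Z,\mathcal{O}_Z)\otimes_k\mathfrak{m}_R\to 0.
\]
The rightmost vertical map is an isomorphism: by Lemma~\ref{lem: newrep} the functor $\pi_{Z*}\mathcal{K}^M_{1,Z}$ is representable by $\mathrm{Res}_{S_Z/k}(\mathbb{G}_{m})$ for $S_Z=\Spec(\HH^0(Z,\mathcal{O}_Z))$, whose deformation functor $1+(\HH^0(Z,\mathcal{O}_Z)\otimes_k\mathfrak{m}_R)$ is identified, via the logarithm in characteristic zero, with $\HH^0(Z,\mathcal{O}_Z)\otimes_k\mathfrak{m}_R$. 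Hence if either the left or the middle vertical arrow is surjective so is the other; varying $R$ and applying Remark~\ref{rmk:changingrings} transfers effective pro-representability in both directions, and the \'etale statement follows identically since all splittings descend to Galois invariants.

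The main obstacle is the single genuinely new input: the relative blow-up formula for $A^1(-;K_2^M)$ over the non-field base $S$, together with the identification of its quotient term as $A^0(Z_S;K_1^M)$. This is precisely where the relative cycle-module formalism of the appendix is needed, since the localization and homotopy-invariance arguments must be run over $R$ rather than over a field and must be compatible with the Gersten isomorphism of Corollary~\ref{cor: gerst}. Once this sequence is available, the remaining deformation-theoretic bookkeeping and the identification of the rightmost arrow are routine adaptations of Lemmas~\ref{lem: probunlem2} and~\ref{lem: blowup}.
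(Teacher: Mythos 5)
Your proposal is correct and takes essentially the same route as the paper's proof: the same reduction and Gersten identifications via Remark \ref{rmk: gerstform} and Corollary \ref{cor: gerst}, the relative blow-up formula of the appendix (your cohomologically indexed form, in which only the $j=1$ summand $A^0(Z_S;K_1^M)$ survives because negative-codimension terms vanish, is exactly what Proposition \ref{prop: blowup} together with Remark \ref{rmk: cohho} yield), the same deformation ladder, and the same conclusion via Lemma \ref{lem: newrep}, Remark \ref{rmk:changingrings}, and Galois descent of splittings for the \'etale case. The only difference is cosmetic: you spell out the Weil restriction $\mathrm{Res}_{S_Z/k}(\mathbb{G}_m)$ and the logarithm identification of its deformation functor, which is in fact slightly more careful than the paper's shorthand that $\rho_*\mathcal{K}^M_{1,Z}$ is represented by $\mathbb{G}_{m,k}$ (accurate only when $Z$ is geometrically connected).
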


\begin{proof}
Write $S=\mathrm{Spec}(R)$ for a power series ring $R=k[[t_1,...,t_r]]$ for some integer $r>0$. We can assume $Z$ is smooth and connected with $\mathrm{codim}(Z,X)\geq 2$. Note we have \[\HH^1(X_S,\mathcal{K}^M_{2,X_S})\cong A^1(X_S;K_2^M) \quad \mbox{and} \quad \HH^1(\mathrm{Bl}_{Z_S}(X_S),\mathcal{K}^M_{2,{\mathrm{Bl}_{Z_S}(X_S)}})\cong A^1(\mathrm{Bl}_{Z_S}(X_S);K_2^M).\] By the blow-up formula for relative $K$-homology, see Remark \ref{rmk: cohho} and Proposition \ref{prop: blowup} below, there's an exact sequence \[ 0\rightarrow A^1(X_S;K_2^M)\rightarrow A^1(\mathrm{Bl}_{Z_S}(X_S);K^M_2)\rightarrow A^0(Z_S;K_1^M)\rightarrow 0.\] Similar to before, there is then a commutative diagram \begin{equation}\begin{tikzcd}[column sep = small] 0\arrow{r} & \mathrm{Def}(R^1\pi_*\mathcal{K}^M_{2,X})(R)\arrow{r}\arrow{d} & \mathrm{Def}(R^1\varphi_*\mathcal{K}^M_{2,\mathrm{Bl}_Z(X)})(R)\arrow{r}\arrow{d} &\mathrm{Def}(\rho_*\mathcal{K}^M_{1,Z})(R)\arrow{r}\arrow{d} & 0  \\
		0\arrow{r} &  \HH^1(X,\Omega_{X}^1)\otimes_k \mathfrak{m}_R\arrow{r} & \HH^1(\mathrm{Bl}_Z(X),\Omega_{\mathrm{Bl}_Z(X)}^1)\otimes_k \mathfrak{m}_R\arrow{r} & \HH^0(Z,\mathcal{O}_Z)\otimes_k \mathfrak{m}_R \arrow{r} & 0
\end{tikzcd}\end{equation} where $\rho:Z\rightarrow k$ is the structure morphism of $Z$. Since $\rho_*\mathcal{K}_{1,Z}^M$ is representable by $\mathbb{G}_{m,k}$ by Lemma \ref{lem: newrep}, the rightmost vertical arrow is an isomorphism, and the lemma follows as before.
\end{proof}
	
As a result of the above two lemmas, we get:

\begin{thm}\label{thm: birational2}
	Fix an algebraic extension $k/\mathbb{Q}$ and let $\pi_X:X\rightarrow k$ and $\pi_Y:Y\rightarrow k$ be two smooth, proper, and geometrically connected $k$-schemes. Suppose that $X$ and $Y$ are stably birational over $k$.
	
	Then $(R^1\pi_{X*}\mathcal{K}^M_{2,X})_\tau$ is algebraizable if and only if $(R^1\pi_{Y*}\mathcal{K}^M_{2,Y})_\tau$ algebraizable for either $\tau=\mathrm{Zar}, \acute{e}t$.$\hfill\square$
\end{thm}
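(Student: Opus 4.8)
The plan is to follow the proof of Theorem \ref{thm: birational} verbatim, substituting the $i=1$ versions of the projective bundle and blow-up lemmas for their $i=2$ counterparts. The first step is to record the reduction established in Section \ref{sec:back}: for these functors algebraizability is equivalent to effective pro-representability, one implication being Artin's algebraization theorem (applicable since $R^1\pi_{X*}\mathcal{K}^M_{2,X}$ and its \'etale sheafification are locally of finite presentation) and the reverse being definitional. Hence it suffices to show that effective pro-representability of $(R^1\pi_{X*}\mathcal{K}^M_{2,X})_\tau$ is transferred across a stable birational equivalence, for both $\tau=\mathrm{Zar}$ and $\tau=\acute{e}t$.

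Next I would unwind stable birationality: there exist $r,s\geq 0$ and a birational equivalence $X\times\mathbb{P}^r\dashrightarrow Y\times\mathbb{P}^s$ of smooth, proper, geometrically connected $k$-schemes. Since $X\times\mathbb{P}^r=\mathbb{P}(\mathcal{O}_X^{\oplus(r+1)})$ is a projective bundle over $X$, Lemma \ref{lem: probunlem2} shows that $(R^1\pi_{X*}\mathcal{K}^M_{2,X})_\tau$ is effectively pro-representable if and only if $(R^1\varphi_{X*}\mathcal{K}^M_{2,X\times\mathbb{P}^r})_\tau$ is, and symmetrically for $Y$. By the Weak Factorization theorem over $k$, \cite[Theorem 0.0.1 (1)]{MR2483958}, the birational map $X\times\mathbb{P}^r\dashrightarrow Y\times\mathbb{P}^s$ factors into a finite chain of blow-ups and blow-downs along smooth centers, through smooth and proper $k$-schemes. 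Each elementary step preserves effective pro-representability by the blow-up lemma preceding this theorem, so that $(R^1\varphi_{X*}\mathcal{K}^M_{2,X\times\mathbb{P}^r})_\tau$ is effectively pro-representable if and only if $(R^1\varphi_{Y*}\mathcal{K}^M_{2,Y\times\mathbb{P}^s})_\tau$ is. Chaining these equivalences yields the theorem.

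The one point requiring care is that the two lemmas are genuine ``if and only if'' statements about effective pro-representability, a notion which by definition presupposes pro-representability; thus one must check that pro-representability persists along the entire factorization chain. This is automatic: by Bloch's criterion \cite{MR371891}, pro-representability of $R^1\pi_*\mathcal{K}^M_{2,X}$ is governed by the vanishing of $\HH^1(X,\mathcal{O}_X)$ and $\HH^2(X,\mathcal{O}_X)$, and these coherent cohomology groups are unchanged by passage to a projective bundle or blow-up along a smooth center, hence are stable birational invariants. Consequently pro-representability holds (or fails) simultaneously at every stage: when it fails, both functors are non-algebraizable and the statement holds vacuously, and when it holds, the lemmas apply as stated. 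The hard part is therefore not the argument itself but is already discharged in the two preceding lemmas, whose $i=1$ content rests on the relative $K$-cohomology and its projective bundle and blow-up formulas developed in the appendix; in particular, the appearance of $\mathrm{Def}(\rho_*\mathcal{K}^M_{1,Z})$ in place of a Picard term, together with its representability by a Weil restriction of $\mathbb{G}_m$ (Lemma \ref{lem: newrep}), is exactly what forces the rightmost vertical comparison to be an isomorphism. Finally, the \'etale case runs identically, invoking the \'etale versions of both lemmas, in which the relevant splittings descend to Galois invariants.
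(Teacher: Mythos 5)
Your proposal is correct and follows essentially the same route as the paper: the paper proves Theorem \ref{thm: birational2} exactly as Theorem \ref{thm: birational}, i.e.\ by combining Artin algebraization (via local finite presentation), Lemma \ref{lem: probunlem2}, the $i=1$ blow-up lemma, and Weak Factorization. Your additional remark that pro-representability itself propagates along the factorization chain (via Bloch's criterion and the stable birational invariance of $\HH^1(X,\mathcal{O}_X)$ and $\HH^2(X,\mathcal{O}_X)$) is precisely the point the paper makes in its introduction, so it is a welcome clarification rather than a deviation.
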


\begin{exmp}
In \cite[Theorem C]{MR0688259} it's shown that if $X$ is a geometrically rational surface over $k$ with a $k$-rational point, then the group $\HH^1(X,\mathcal{K}_{2,X}^M)$ is isomorphic with the group of $k$-points of the $k$-torus that's dual to the $\mathrm{Gal}(\overline{\mathbb{Q}}/k)$-module $\mathrm{Pic}(X_{\overline{\mathbb{Q}}})$.
	
It follows from Theorem \ref{thm: birational2} that if $\pi:X\rightarrow k$ is a rational surface over an algebraic field extension $k/\mathbb{Q}$, then $R^1\pi_*\mathcal{K}^M_{2,X}$ and $(R^1\pi_*\mathcal{K}^M_{2,X})_{\acute{e}t}$ are also algebraizable since $X$ is stably birational to $\mathrm{Spec}(k)$.
\end{exmp}

\noindent\textbf{Acknowledgments}. I'd like to thank Niranjan Ramachandran for both bringing my attention to the problem of representability of higher $K$-cohomology groups which started this line of inquiry and for the many conversations that we've had about this work during its genesis.

\appendix

\section{$K$-cohomology and relative $K$-homology}
In Rost's construction of $K$-(co)homology groups \cite{MR1418952}, there is an underlying assumption that all schemes considered are of finite type over a fixed perfect ground field. However, these assumptions aren't needed for many of the constructions (see Remark 2.8, ibid.) and, in this paper, we want to use many of the properties of $K$-cohomology groups for much more general schemes (e.g.\ those of Remark \ref{rmk: gerstform}).
	
The first objective for this appendix, then, is to set-up a generalized version of both $K$-cohomology groups, and of relative $K$-homology groups, with the intent of applying them to the schemes of Remark \ref{rmk: gerstform}.
	
One generalization of Rost's constructions is given in the reference \cite[Chapter IX]{MR2427530} which, ultimately, gives a very satisfactory theory for schemes of finite type over a field. At some points though, technical aspects of dimension make the constructions of \cite[Chapter IX]{MR2427530} less suitable for the generalized setting we're interested in (cf.\ Remark \ref{rmk: a1} and Remark \ref{rmk: flatreldim}).
	
It turns out that these subtleties can be remedied (for the most part) with only some slight modifications. So, we use the reference \cite[Chapter IX]{MR2427530} as a guide for many statements and their proofs, pointing out what changes should be made to where, in order to get a working theory in the setting we're interested in.

\subsection{Topological preliminaries}
Let $X$ be a sober Noetherian topological space of finite dimension (e.g.\ the underlying topological space of a finite dimensional Noetherian scheme). We will use the terminology below, following \cite{MR3631826}.

\begin{defn} Let $X$ be as above.
\begin{enumerate}
\item $X$ is said to be \textit{equidimensional} if all irreducible components of $X$ have the same dimension.
\item $X$ is said to be \textit{equicodimensional} if all minimal irreducible closed subsets of $X$ have the same codimension in $X$.
\item $X$ is \textit{catenary} if for all irreducible closed subsets $T\subset T'$, every maximal chain of irreducible closed subsets starting with $T$ and ending with $T'$ has the same length.
\item $X$ is \textit{weakly biequidimensional} if $X$ is equidimensional, equicodimensional, and catenary.
\item $X$ is \textit{biequidimensional} if all maximal chains of irreducible closed subsets of $X$ have the same length.
\end{enumerate}
\end{defn}

A biequidimensional space is weakly biequidimensional, but not conversely \cite[\S2-3]{MR3631826}. However, a space which is equidimensional, catenary, and such that all of its irreducible components are equicodimensional is biequidimensional.

\begin{defn}
Let $f:X\rightarrow Y$ be a continuous map of topological spaces. We say that $f$ is \textit{catenarious} if \[\mathrm{codim}(T,T')\geq \mathrm{codim}(\overline{f(T)},\overline{f(T')})\] for all of irreducible closed subsets $T\subset T'$ of $X$.
\end{defn}

\begin{rmk}
Suppose that $X$ and $Y$ are catenary. Then for a continuous map $f:X\rightarrow Y$ to be catenarious, it's necessary and sufficient that for any irreducible closed subsets $T\subset T'$ with $\mathrm{codim}(T,T')=1$ we have $\mathrm{codim}(\overline{f(T)},\overline{f(T')})=0$ or $\mathrm{codim}(\overline{f(T)},\overline{f(T')})=1$. Necessity is clear. To see that this condition is sufficient, let $T\subset T''$ be any inclusion of irreducible closed subsets of $X$ with $\mathrm{codim}(T,T'')=n$. Then one can find a maximal chain of irreducible closed subsets \[ T=T_0\subset T_1\subset \cdots \subset T_n=T''.\] As $X$ and $Y$ are catenary, we have \begin{align*} \mathrm{codim}(\overline{f(T)},\overline{f(T'')})&=\sum_{i=0}^{n-1}\mathrm{codim}(\overline{f(T_i)},\overline{f(T_{i+1})})\\ &\leq \sum_{i=0}^{n-1}\mathrm{codim}(T_i,T_{i+1})\\ &=\mathrm{codim}(T,T'').\end{align*}
\end{rmk}

\begin{exmp}
Let $X$ be a finite dimensional Noetherian scheme. Let $Y=\mathrm{Spec}(R)$ be the spectrum of a discrete valuation ring $R$. Then any morphism $f:X\rightarrow Y$ is catenarious.
\end{exmp}

Roughly speaking, catenarious maps are a maps which respect the codimension relation in a strong sense. The following lemma gives a sufficient condition to guarantee that a map is catenarious.

\begin{lem}\label{lem: fincat}
	Let $X$ and $Y$ be catenary Noetherian schemes of finite dimension. Suppose that $f:X\rightarrow Y$ is an integral morphism of schemes and that $X$ has equicodimensional irreducible components. Then $f$ is catenarious.
\end{lem}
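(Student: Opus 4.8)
The plan is to convert the catenarious inequality into an identity of dimensions. The point is that an integral morphism is closed and preserves the dimension of every irreducible closed subscheme, while the hypotheses on $X$ force each irreducible component of $X$ to be biequidimensional, so that the relative dimension formula $\mathrm{codim}(T,T')=\dim T'-\dim T$ is available on $X$. On $Y$ one has only the cheap inequality $\mathrm{codim}(W,W')\le \dim W'-\dim W$, and comparing the two is exactly what is needed.

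First I would isolate two inputs. Since $f$ is integral it is affine and universally closed, so for any irreducible closed $T\subseteq X$ the image $\overline{f(T)}=f(T)$ is irreducible and closed in $Y$, and the induced surjection $f|_T\colon T\to f(T)$ (which factors the integral map $T\to Y$ through the closed immersion $f(T)\hookrightarrow Y$) is again integral. By the going-up and lying-over theorems a surjective integral morphism preserves Krull dimension, so $\dim f(T)=\dim T$; this is the step where integrality is genuinely used, and it is obtained from Cohen--Seidenberg rather than from any properness of $f$. For the second input, let $X_i$ be an irreducible component of $X$: it is catenary (a closed subscheme of the catenary scheme $X$), equidimensional (being irreducible), and equicodimensional by hypothesis, hence biequidimensional by the observation recorded after the definitions above. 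In a biequidimensional space the identity $\dim Z+\mathrm{codim}(Z,X_i)=\dim X_i$ holds \cite{MR3631826}, and combining it with additivity of codimension along $T\subseteq T'\subseteq X_i$ (valid since $X_i$ is catenary) yields $\mathrm{codim}(T,T')=\dim T'-\dim T$ for all irreducible closed $T\subseteq T'\subseteq X_i$.

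Now let $T\subseteq T'$ be irreducible closed subsets of $X$. Choose an irreducible component $X_i\supseteq T'$; since every irreducible closed subset lying between $T$ and $T'$ is contained in $T'\subseteq X_i$, the value of $\mathrm{codim}(T,T')$ is the same whether computed in $X$ or in $X_i$, and the second input gives $\mathrm{codim}(T,T')=\dim T'-\dim T$. In $Y$, any chain of irreducible closed subsets realizing $\mathrm{codim}(f(T),f(T'))$ increases dimension by at least one at each strict inclusion (as $Y$ has finite dimension), so $\mathrm{codim}(f(T),f(T'))\le \dim f(T')-\dim f(T)$. Applying $\dim f(T)=\dim T$ and $\dim f(T')=\dim T'$ from the first input gives
\[
\mathrm{codim}(f(T),f(T'))\le \dim f(T')-\dim f(T)=\dim T'-\dim T=\mathrm{codim}(T,T'),
\]
which is precisely the catenarious condition.

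The main obstacle is the dimension-preservation statement $\dim f(T)=\dim T$, and a secondary point of care is the passage to a single component $X_i$, which relies on the intrinsic nature of codimension just noted. I prefer this dimension-formula route to the alternative of invoking the remark preceding this lemma to reduce to the case $\mathrm{codim}(T,T')=1$ and lifting a length-two chain through $f|_{T'}$, since the latter is complicated by the fact that the fibers of $f$ may contain several points, whereas the dimension computation is insensitive to this.
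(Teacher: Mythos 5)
Your proof is correct, and it takes a genuinely different route from the paper's. The paper argues by contradiction with chains: after reducing to $X=T'$ irreducible and $Y=f(X)$, it pushes a maximal chain through $T$ forward to $Y$ (strictness of inclusions being preserved by integrality), uses catenarity of $Y$ to extend the image to a strictly longer maximal chain when the catenarious inequality is assumed to fail, lifts that longer chain back to $X$ because integral morphisms lift specializations, and then contradicts equicodimensionality via catenarity of $X$. You instead convert everything to dimensions: biequidimensionality of each component (via the observation following the definitions) gives $\mathrm{codim}(T,T')=\dim T'-\dim T$ on the source; the Cohen--Seidenberg corollary $\dim f(T)=\dim T$ for surjective integral morphisms (this is Stacks Tag 0ECG, which the paper itself invokes in Example \ref{exmp: normcat}; note that the inequality $\dim f(T)\geq \dim T$ really uses incomparability, not just lying-over and going-up) transports this to $Y$; and the elementary bound $\mathrm{codim}(W,W')\le\dim W'-\dim W$ for irreducible closed subsets of a finite-dimensional space finishes. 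Both arguments rest on the same commutative algebra, but packaged differently, and yours buys two things: it never uses catenarity of $Y$ (only its finite-dimensionality), so it proves a slightly stronger statement; and it avoids the paper's reduction step of replacing $X$ by $T'$, which is delicate because equicodimensionality of the irreducible components of $X$ is not obviously inherited by an arbitrary irreducible closed subset $T'$. What the paper's chain-lifting argument buys in exchange is independence from the dimension formula for biequidimensional spaces cited from \cite{MR3631826}; your route leans on that formula, so you should make sure the form you need (valid for the strong notion of biequidimensionality used in this paper, where all maximal chains have equal length) is the one you cite.
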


\begin{proof}
	Let $T\subset T'$ be two irreducible closed subsets of $X$. To verify that $f$ is catenarious we suppose that $\mathrm{codim}(T,T')< \mathrm{codim}(f(T),f(T'))$ and aim to deduce a contradiction. Replacing $X$ by $T'$, we may assume that $X$ is irreducible. Similarly, replacing $Y$ by $f(X)$, we can assume that $f$ is integral and surjective.
	
	We can find a maximal chain of irreducible closed subsets $T_0=T\subset \cdots\subset T_n\subset X$ and a maximal chain of irreducible closed subsets $T_{-m}\subset \cdots \subset T_{-1}\subset T_0$ giving a maximal chain $\mathcal{C}$ of $X$ containing $T$: \[\mathcal{C}:\quad T_{-m}\subset \cdots\subset T_0=T\subset\cdots \subset T_n\subset X.\] Since $f$ is integral, the image $f(\mathcal{C})$ of this chain in $Y$ is a chain of irreducible closed subsets containing $f(T)$. If $\mathrm{codim}(T,T')<\mathrm{codim}(f(T),f(T'))$ then the chain $f(\mathcal{C})$ is not maximal, and we can extend it to a maximal chain of strictly longer length, say $\mathcal{D}$ (here we use that $Y$ is catenary). 
	
	Now since $f$ lifts specializations \cite[\href{https://stacks.math.columbia.edu/tag/0066}{Tag 0066}]{stacks-project}, we can find a chain $\mathcal{C}'$ of irreducible closed subsets of $X$ so that $f(\mathcal{C}')=\mathcal{D}$. Since $X$ is catenary, this implies that $X$ has two minimal irreducible closed subsets with differing codimension in $X$, contradicting the assumption that $X$ is equicodimensional.
\end{proof}

\begin{exmp}\label{exmp: normcat}
	Suppose that $X$ is an integral, equicodimensional, excellent Noetherian scheme of finite dimension. Let $f:X^\nu\rightarrow X$ be the normalization of $X$ in its field of fractions. Then $X^\nu$ is equicodimensional and $f$ is finite as we now show. Thus Lemma \ref{lem: fincat} shows that $f$ is catenarious.
	
	Since $X$ is excellent, the normalization map is finite by \cite[\href{https://stacks.math.columbia.edu/tag/035R}{Tag 035R}]{stacks-project}. Since $f$ is surjective, we have $\mathrm{dim}(X)=\mathrm{dim}(X^\nu)$ by \cite[\href{https://stacks.math.columbia.edu/tag/0ECG}{Tag 0ECG}]{stacks-project}. Suppose that $x'\in X^\nu$ is a minimal irreducible closed subset, i.e.\ a closed point. Then $x=f(x')$ is a closed point of $X$. Since $X$ is equicodimensional and irreducible, there is an equality $\mathrm{codim}(x,X)=\mathrm{dim}(X)$. Let $\mathcal{D}$ be a chain of irreducible closed subsets of $X$ containing $x$ with this maximal length.
	
	Since the going down theorem holds for the normalization of a domain, it follows that generalizations lift along the map $f$. In particular, there is a maximal chain $\mathcal{C}$ of irreducible closed subsets of $X^\nu$ containing $x'$ such that $f(\mathcal{C})=\mathcal{D}$. This shows that $\mathrm{codim}(x',X^\nu)\geq \mathrm{dim}(X)=\mathrm{dim}(X^\nu)$ and so equality must hold. Since $x'$ was arbitrary, it follows that $X^\nu$ is equicodimensional as well.
\end{exmp}

In general, however, even very reasonable maps will not be catenarious. For this reason, it's advantageous to introduce the following weaker notion:

\begin{defn}
	Let $f:X\rightarrow Y$ be a continuous map of topological spaces. We say that $f$ is \textit{weakly catenarious} if for every point $y\in Y$ and for every irreducible component $T'\subset f^{-1}(y)$ we have \[\mathrm{codim}(T,\overline{T'})\geq \mathrm{codim}(\overline{f(T)},\overline{f(T')})\] for every irreducible closed subset $T\subset \overline{T'}\subset X$ with $\mathrm{codim}(T,\overline{T'})=1$.
\end{defn}

The following lemma gives a sufficient condition to guarantee a map is weakly catenarious.

\begin{lem}\label{lem: flatcat}
	Let $X$ be a locally Noetherian catenary scheme. Suppose that $f:X\rightarrow Y$ is a flat morphism of schemes and assume also that, for every point $y\in Y$ and for every point $x\in \overline{X_y}=\overline{f^{-1}(y)}$, the scheme $\mathrm{Spec}(\mathcal{O}_{\overline{X_y},x})$ is equidimensional. Then $f$ is weakly catenarious.	
\end{lem}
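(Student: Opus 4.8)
The plan is to reduce the statement to a computation with the flat local homomorphism of local rings that $f$ induces at the generic point of $T$, and then to extract the codimension bound from the standard dimension formula for flat local maps together with the fibre hypothesis.

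First I would fix the data: a point $y\in Y$, an irreducible component $T'$ of $f^{-1}(y)$, and an irreducible closed subset $T\subset \overline{T'}$ with $\mathrm{codim}(T,\overline{T'})=1$. Write $\eta$ and $\eta'$ for the generic points of $T$ and $\overline{T'}$ (so $\eta'$ is the generic point of $T'$ and $f(\eta')=y$), and set $R=\mathcal{O}_{X,\eta}$ and $A=\mathcal{O}_{Y,f(\eta)}$. Since $f$ is flat, $A\to R$ is a flat local homomorphism; being local it is faithfully flat, and as $X$ is locally Noetherian the ring $R$ is Noetherian, whence $A$ is Noetherian by faithfully flat descent of the ascending chain condition. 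Let $\mathfrak{p}'\subset R$ be the prime of $\eta'$ and $\mathfrak{q}=\mathfrak{p}'\cap A$ the prime of $y$. Unwinding the definitions through local rings, the hypothesis $\mathrm{codim}(T,\overline{T'})=1$ becomes $\dim(R/\mathfrak{p}')=1$, while the quantity I must bound is $\mathrm{codim}(\overline{f(T)},\overline{f(T')})=\dim(A/\mathfrak{q})$. So the lemma reduces to proving $\dim(A/\mathfrak{q})\le 1$.

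The main tool is the dimension formula for flat local homomorphisms, applied to the map $A/\mathfrak{q}\to R/\mathfrak{q}R$ (a base change of $A\to R$, hence flat, local, and Noetherian) whose closed fibre is $R/\mathfrak{m}_AR$, giving
\[\dim(R/\mathfrak{q}R)=\dim(A/\mathfrak{q})+\dim(R/\mathfrak{m}_AR).\]
Thus it suffices to show $\dim(R/\mathfrak{q}R)=1$, and this is where the fibre hypothesis and going-down enter. Because $\eta'$ is the generic point of a component of $f^{-1}(y)$ and $\mathrm{Spec}(R)$ is open, $\mathfrak{p}'$ is a minimal prime of the fibre ring $R\otimes_A\kappa(\mathfrak{q})$; more generally, by going-down for the flat map $A\to R$, every minimal prime over $\mathfrak{q}R$ contracts to $\mathfrak{q}$, so the minimal primes of $R/\mathfrak{q}R$ are exactly the generic points $\mathfrak{p}_i$ of the irreducible components of $\overline{f^{-1}(y)}=\overline{X_y}$ passing through $\eta$. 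Hence $\dim(R/\mathfrak{q}R)=\max_i\dim(R/\mathfrak{p}_i)$, and each $\dim(R/\mathfrak{p}_i)$ is the dimension of the corresponding component of $\mathrm{Spec}(\mathcal{O}_{\overline{X_y},\eta})$. The hypothesis that $\mathrm{Spec}(\mathcal{O}_{\overline{X_y},\eta})$ is equidimensional forces all these dimensions to agree, and since the one for $\mathfrak{p}'$ equals $\dim(R/\mathfrak{p}')=1$, they all equal $1$. Therefore $\dim(R/\mathfrak{q}R)=1$, and the displayed formula yields $\dim(A/\mathfrak{q})=1-\dim(R/\mathfrak{m}_AR)\le 1$, as required.

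I expect the main obstacle to be the bookkeeping that identifies $\dim(R/\mathfrak{q}R)$ with the generic-fibre data rather than any genuine calculation: one must check that restricting to the open set $\mathrm{Spec}(R)$ preserves irreducible components of the fibre, that closure of the fibre commutes with this restriction, and — crucially — that it is going-down which prevents the minimal primes over $\mathfrak{q}R$ from contracting to a strictly larger prime of $A$. The equidimensionality hypothesis is used at exactly one point, to propagate $\dim(R/\mathfrak{p}')=1$ to all fibre components through $\eta$, while flatness is used twice, once for the dimension formula and once for going-down; catenarity of $X$, interestingly, is not needed for this particular implication.
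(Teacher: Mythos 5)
Your proof is correct, but it concludes by a different mechanism than the paper's. Both arguments share the same skeleton --- localize at the generic point $\eta$ of $T$, use going-down for the flat map to identify the fibre data near $\eta$ with $\mathrm{Spec}(R/\mathfrak{q}R)$, and feed in the equidimensionality hypothesis at exactly the same spot --- but where the paper stays purely topological (it shows every maximal chain of irreducible closed subsets in the localized fibre-closure has length $1$, then lifts a hypothetical chain of length $>1$ from $Y$ along the flat map to get a contradiction), you instead invoke the dimension formula $\dim(R/\mathfrak{q}R)=\dim(A/\mathfrak{q})+\dim(R/\mathfrak{m}_A R)$ for flat local homomorphisms of Noetherian local rings. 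The trade-offs: your route needs $A=\mathcal{O}_{Y,f(\eta)}$ to be Noetherian, which is not among the hypotheses ($Y$ is an arbitrary scheme), and you correctly patch this by descending the ascending chain condition along the faithfully flat map $A\to R$ --- an extra step the paper's chain-lifting argument never needs, since lifting generalizations along flat maps requires no finiteness on $Y$. In exchange, your argument is sharper in two respects: it yields the equality $\dim(A/\mathfrak{q})=1-\dim(R/\mathfrak{m}_A R)$ rather than just a bound, and it makes visible that the catenary hypothesis on $X$ is never used (the paper cites catenarity when bounding chain lengths in the localized fibre-closure, whereas in your formulation equidimensionality alone pins down $\dim(R/\mathfrak{q}R)=1$). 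The bookkeeping you flag at the end --- that the minimal primes of $R/\mathfrak{q}R$ are exactly the generic points of the components of $\overline{X_y}$ through $\eta$, with matching dimensions --- is indeed the only delicate point, and it is precisely where going-down enters: flatness gives $\overline{X_y}\cap\mathrm{Spec}(R)=V(\mathfrak{q}R)$, which is the local incarnation of the identity $\overline{f^{-1}(y)}=f^{-1}(\overline{\{y\}})$ with which the paper's proof opens.
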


\begin{proof}
Let $y\in Y$ be a point and let $T'$ be an irreducible component of $f^{-1}(y)$. Let $T$ be any irreducible closed subset of $\overline{T'}\subset X$ with $\mathrm{codim}(T,\overline{T'})=1$. Let $x\in X$ be the generic point of $T$ and $y'=f(x)$.

Since $f$ is flat, the inclusion $\overline{f^{-1}(y)}\subset f^{-1}(\overline{\{y\}})$ is an equality. This is simply because if $z\in f^{-1}(\overline{\{y\}})$ then either $f(z)=y$ or $y$ specializes to $f(z)$. In the latter case, there is a point $z'$ specializing to $z$ and mapping to $y$ by \cite[\href{https://stacks.math.columbia.edu/tag/03HV}{Tag 03HV}]{stacks-project}. Hence $z\in \overline{\{z'\}}\subset \overline{f^{-1}(y)}$ since closed sets are stable under specialization.

Replacing $Y$ with $\overline{f(T')}=\overline{\{y\}}$ and $X$ with $\overline{f^{-1}(y)}=f^{-1}(\overline{\{y\}})$, we may assume that $f$ is flat and dominant. Further, we may replace $X$ with $\mathrm{Spec}(\mathcal{O}_{X,x})$ and $Y$ with $\mathrm{Spec}(\mathcal{O}_{Y,y'})$.

The scheme $\overline{T'}$ is now an irreducible component of $X$. Further. the scheme $T=x$ is a minimal irreducible closed subset of $X$ contained in every maximal chain of irreducible closed subsets of $X$. Since $X$ is assumed equidimensional and catenary, and because codimension $\mathrm{codim}(T,\overline{T'})=1$, it follows that every maximal chain of irreducible closed subsets of $X$ must have length $1$. 

Now if $\mathrm{codim}(\overline{f(T)},\overline{f(T')})>1$, then there is a chain of irreducible closed subsets $\mathcal{D}$ of $Y$ of length more than $1$. Since generalizations lift under flat maps, if this inequality held, then we would be able to lift $\mathcal{D}$ to a chain of length more than $1$ in $X$ by \cite[\href{https://stacks.math.columbia.edu/tag/03HV}{Tag 03HV}]{stacks-project}. Hence $\mathrm{codim}(\overline{f(T)},\overline{f(T')})\leq 1$ as desired. 
\end{proof}

\begin{exmp}
Let $X$ and $Y$ be catenary Noetherian schemes of finite dimension. Suppose that $f:X\rightarrow Y$ is flat and assume also that for every point $y\in Y$, the fiber $X_y$ is irreducible. Then for every $x\in \overline{X_y}$ the scheme $\mathrm{Spec}(\mathcal{O}_{\overline{X_y},x})$ is equidimensional. Lemma \ref{lem: flatcat} shows that such a map $f$ is weakly catenarious.
\end{exmp}

\begin{exmp}\label{exmp: localweakcat}
Let $(R,\mathfrak{m}_R)$ and $(S,\mathfrak{m}_S)$ be two local Noetherian rings with Krull dimension $\mathrm{dim}(\mathrm{Spec}(S))=\mathrm{dim}(\mathrm{Spec}(R))=2$. Suppose that $\mathrm{Spec}(R)$ is irreducible and suppose that $\mathrm{Spec}(S)$ is equidimensional. Let \[f:\mathrm{Spec}(S)\rightarrow \mathrm{Spec}(R)\] be any flat map such that $f^{-1}(\mathfrak{m}_R)=\mathfrak{m}_S$. Then $f$ is weakly catenarious as we now show.

Let $\eta_R\subset \mathrm{Spec}(R)$ be the generic point. Then $f^{-1}(\eta_R)$ contains all of the generic points of $\mathrm{Spec}(S)$ since $f$ is flat. So the closure of an irreducible component of $f^{-1}(\eta_R)$ is an irreducible component $T'$ of $\mathrm{Spec}(S)$. Let $t\in \mathrm{Spec}(S)$ be such that $\mathrm{codim}(\overline{\{t\}},T')=1$. Since $S$ is equidimensional, we have that $\mathrm{dim}(T')=2$. Then $f(t)\neq \mathfrak{m}_S$, so $\mathrm{codim}(\overline{\{f(t)\}}, \mathrm{Spec}(R))\leq 1$ as desired.

Next, if $z\in \mathrm{Spec}(R)$ is any point with $\mathrm{codim}(\overline{\{z\}}, \mathrm{Spec}(R))=1$ then the closure $\overline{T'}$ of any irreducible component of $T'\subset f^{-1}(z)$ satisfies $\mathrm{codim}(\overline{T'},\mathrm{Spec}(S))=1$. The only irreducible closed subset of $\overline{T'}$ is $\mathfrak{m}_S$, and the claim follows.
\end{exmp}

\subsection{$K$-cohomology} Let $X$ be a separated and excellent scheme. We set \[C(X)=\bigoplus_{x\in X} \bigoplus_{n\in \mathbb{Z}} K^M_n(\kappa(x))\] where $K^M_n(\kappa(x))$ is the $n$th Milnor $K$-theory of the residue field of $x$. We construct an endomorphism $d_X$ of the group $C(X)$ as follows. It suffices to specify the map $d_X$ componentwise as \[(d_X)_{x'}^x:K^M_n(\kappa(x))\rightarrow K^M_m(\kappa(x'))\] for every pair of points $x,x'\in X$ and for all pairs of integers $m,n\in \mathbb{Z}$. We set this map to be $0$ unless $m=n-1$ and $x'$ is a specialization of $x$ such that $\mathrm{codim}(\overline{\{x'\}},\overline{\{x\}})=1$. Otherwise, we set \[(d_X)_{x'}^x=\sum_{x_i'} c_{\kappa(x_i')/\kappa(x')}\circ\partial_{v_i} :K^M_n(\kappa(x))\rightarrow K^M_{n-1}(\kappa(x'))\] to be sum of the composition of the residue maps \[\partial_{v_i}:K^M_n(\kappa(x))\rightarrow K^M_{n-1}(\kappa(x_i'))\] and norm maps \[c_{\kappa(x_i')/\kappa(x')}:K^M_{n-1}(\kappa(x_i'))\rightarrow K^M_{n-1}(\kappa(x'))\] over all valuations $v_i$ determined by points $x_i'$ lying over $x'$ in the normalization of $\mathcal{O}_{\overline{\{x\}},x'}$. Note that for any $f\in C(X)$ we have $d_X(f)\in C(X)$ by \cite[Lemma 49.1]{MR2427530}.

\begin{rmk}\label{rmk: a1}
	The definition that we give for the endomorphism $d_X$ is strictly different from the definition of $d_X$ given in \cite[\S49.A]{MR2427530}, which specifies that the $(x,x')$ component of $d_X$ is possibly nonzero only for $x,x'\in X$ such that $\mathrm{dim}(\overline{\{x\}})=\mathrm{dim}(\overline{\{x'\}})+1$.
	
	For instance, let $T=\mathrm{Spec}(k[[x]][y])$ and consider the maximal ideal $\mathfrak{m}=(xy-1)$. Then $\mathrm{dim}(T)=2$ and $\mathrm{dim}(\mathfrak{m})=0$, but $\mathrm{codim}(\mathfrak{m},T)=1$. So $\mathfrak{m}$ is a point considered in our definition which is not considered in the definition of the map $d_X$ in \cite[\S49.A]{MR2427530}.
	
	We will frequently replace conditions on dimension that are used in \cite[\S49.A]{MR2427530} by either conditions on codimension or by conditions on relative dimension (as defined in \cite[\S20.1]{MR1644323}). The latter two of these three seem to have better properties than the former.
\end{rmk}

Let $Y$ be another separated and excellent scheme. Associated to any proper morphism $f:X\rightarrow Y$ is a functorial pushforward \[f_*:C(X)\rightarrow C(Y)\] defined as in \cite[\S49.C]{MR2427530}. For $x\in X$ and $y\in Y$ the map $f_*$ is defined on componentwise \[(f_*)^x_y:K_n^M(\kappa(x))\rightarrow K_m^M(\kappa(y))\] to be trivial unless $y=f(x)$, the extension $\kappa(x)/\kappa(y)$ is finite, and $n=m$. If these conditions are met, then one sets $(f_*)^x_y=c_{\kappa(x)/\kappa(y)}$ to be the associated norm map.

\begin{lem}\label{lem: properpush}
Let $X$ and $Y$ be separated and excellent schemes. Let $f:X\rightarrow Y$ be a proper morphism of schemes. Then the proper pushforward commutes with the endomorphisms $d_X$ and $d_Y$, i.e.\ $d_Y\circ f_*=f_*\circ d_X$.
\end{lem}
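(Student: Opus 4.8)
The plan is to check $d_Y\circ f_* = f_*\circ d_X$ one matrix entry at a time and then to localize until the only content is Weil's reciprocity law for Milnor $K$-theory on a proper curve. Fix $x\in X$ with a class $\alpha\in K^M_n(\kappa(x))$ supported there, fix a target point $y'\in Y$, and compare the $\kappa(y')$-components of $f_*(d_X\alpha)$ and $d_Y(f_*\alpha)$. By inspection of the definitions, the parts of $d_X$ and of $f_*$ that act on the summand $K^M_n(\kappa(x))$ depend only on the integral subschemes $\overline{\{x\}}$ and $\overline{\{f(x)\}}$ (the differential is defined through the normalization of $\mathcal{O}_{\overline{\{x\}},x'}$, and the pushforward only through residue-field extensions), so I may replace $X$ by $\overline{\{x\}}$ and $Y$ by $\overline{\{f(x)\}}$ and assume $X,Y$ integral with generic points $x,y$ and $f$ proper and dominant. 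Only target points with $\mathrm{codim}(\overline{\{y'\}},Y)\le 1$ contribute; if $y'=y$ the comparison takes place along the generic fibre over the field $\kappa(y)$, while if $\mathrm{codim}(\overline{\{y'\}},Y)=1$ I replace $Y$ by the one-dimensional local domain $\Spec(\mathcal{O}_{Y,y'})$ and $X$ by its base change, which remains proper.

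In the codimension-one case I would then pass to a discrete valuation ring base. Because $Y$ is excellent, the normalization $\Spec(\mathcal{O}_{Y,y'}^\nu)\to\Spec(\mathcal{O}_{Y,y'})$ is finite (cf.\ Example \ref{exmp: normcat}), and it is exactly the normalization already built into the definition of $d_Y$; using finiteness of the norm maps together with the functoriality of proper pushforward, the verification splits over the finitely many valuations lying above $y'$ and reduces to $Y=\Spec(\mathcal{O})$ for a DVR $\mathcal{O}$ with fraction field $K$ and residue field $L$. In either remaining situation $f$ is a proper dominant morphism from an integral scheme to the spectrum of a field or a DVR, and a dévissage on relative dimension (as in \cite[\S49]{MR2427530}) lets me assume the fibres are at most one-dimensional, i.e.\ that $X$ is a proper curve over $K$, respectively a proper relative curve over $\mathcal{O}$.

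On such a curve the asserted identity is exactly the reciprocity law. The two inputs are (i) Weil/Bass--Tate--Kato reciprocity, which forces $\sum_{x'} c_{\kappa(x')/L}\,\partial_{x'}(\beta)=0$ over a complete curve and thereby kills the special-fibre contributions, and (ii) the compatibility of the transfer $c_{\kappa(x)/\kappa(y)}$ with residue maps for an extension of discretely valued fields, which handles the generically finite case where $f_*\alpha=c_{\kappa(x)/\kappa(y)}(\alpha)\ne 0$. Both are proved in \cite[\S49]{MR2427530} and \cite{MR1418952} and transfer verbatim, since they are assertions about fields and their valuations rather than about the ambient scheme.

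The main obstacle I expect is not either reciprocity statement but the reduction bookkeeping in the excellent, not-necessarily-finite-type setting. One must check that a codimension-one point $x'$ of $X$ with $f(x')=y'$ and $[\kappa(x'):\kappa(y')]<\infty$ genuinely sits in the closed fibre with the expected relative dimension, so that contributions are indexed correctly and the norms telescope through the tower $\kappa(x)/\kappa(x')/\kappa(y')$ to match the curve reciprocity. This is precisely where codimension and dimension may diverge, the phenomenon flagged in Remark \ref{rmk: a1}, so I would phrase the whole reduction in terms of codimension and relative dimension and verify at each localization that the normalizations, valuations, and norm maps occurring in $d_X$ and in $d_Y$ are matched up by $f$, rather than invoking any global dimension formula.
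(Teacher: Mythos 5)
Your componentwise reduction and your handling of the target points $y'$ with $\mathrm{codim}(\overline{\{y'\}},\overline{\{y\}})\leq 1$ follow the same route as the paper: it also checks the identity one matrix entry at a time, disposes of $y'\notin\overline{\{y\}}$ trivially, and for codimension $\leq 1$ simply runs the classical argument of \cite[Proposition 49.9]{MR2427530} (localization at $y'$, normalization to a DVR base, d\'evissage to curves, Weil reciprocity plus Kato's norm--residue compatibility), exactly as you outline.

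The genuine gap is the case $\mathrm{codim}(\overline{\{y'\}},\overline{\{y\}})>1$, which you dispose of in one clause (``only target points with $\mathrm{codim}\leq 1$ contribute'') and never revisit. For the composite $d_Y\circ f_*$ this vanishing is indeed definitional, but for $f_*\circ d_X$ it is a real assertion: you must show that no point $x'$ with $\mathrm{codim}(\overline{\{x'\}},\overline{\{x\}})=1$ and $f(x')=y'$ has $\kappa(x')/\kappa(y')$ finite, since otherwise the norm map contributes. This is the one place where the paper's proof has content beyond the citation of \cite[Proposition 49.9]{MR2427530}: because $f$ is of finite type and $Y$ is excellent, hence universally catenary, the dimension formula \cite[\href{https://stacks.math.columbia.edu/tag/02JU}{Tag 02JU}]{stacks-project} holds with \emph{equality}, giving \[\mathrm{tr}.\mathrm{deg}(\kappa(x')/\kappa(y'))=\mathrm{codim}(\overline{\{y'\}},\overline{\{y\}})-1+\mathrm{tr}.\mathrm{deg}(R(X)/R(Y))>0,\] so the relevant pushforward component is zero. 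Your closing paragraph makes this worse rather than better: you explicitly propose to avoid ``any global dimension formula'' in favor of bookkeeping at each localization, but localizing at $y'$ does not change the question (you still face a local domain of dimension $\geq 2$ downstairs and must rule out codimension-one points upstairs with finite residue extension), and properness only lifts specializations, which produces \emph{some} point of codimension $\geq 2$ over $y'$ but says nothing about the codimension-one points you need to kill. Some form of the dimension formula --- i.e.\ universal catenarity, which is precisely what the excellence hypothesis buys --- is unavoidable here; it is also the step that makes the paper's codimension convention of Remark \ref{rmk: a1} work where a naive dimension-based convention would not.
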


\begin{proof}
Let $x\in X$ and $y'\in Y$ be two points. Set $y=f(x)$. If $y'\notin\overline{\{y\}}$ then we have that $(d_Y\circ f_*)_{y'}^x=0$ and $(f_*\circ d_X)_{y'}^x=0$. This proves the claim in this case.

We can thus assume $y'\in \overline{\{y\}}$. There are three cases to consider: either $y'=y$ and $\mathrm{codim}(\overline{\{y'\}},\overline{\{y\}})=0$, or $\mathrm{codim}(\overline{\{y'\}},\overline{\{y\}})=1$, or $\mathrm{codim}(\overline{\{y'\}},\overline{\{y\}})>1$.

\underline{Case 1}: $\mathrm{codim}(\overline{\{y'\}},\overline{\{y\}})>1$. In this case, we have $(d_Y\circ f_*)_{y'}^x=0$ by the definition of $d_Y$. To see that $(f_*\circ d_X)_{y'}^x=0$, we note that the morphism $f$ is of finite type so the dimension formula \cite[\href{https://stacks.math.columbia.edu/tag/02JU}{Tag 02JU}]{stacks-project} implies for any $x'\in X$ such that $\mathrm{codim}(\overline{\{x'\}},X)=1$ and $f(x')=y'$ we have $\mathrm{tr}.\mathrm{deg}(\kappa(x')/\kappa(y'))>0$.

\underline{Case 2}: $\mathrm{codim}(\overline{\{y'\}},\overline{\{y\}})\leq 1$. The proof of the claim for this case goes along the same lines as the proof given in \cite[Proposition 49.9]{MR2427530}.
\end{proof}

Assume now that $X$ is Noetherian. For any flat morphism $f:X\rightarrow Y$ there is a flat pullback \[f^*:C(Y)\rightarrow C(X)\] defined as in \cite[\S49.D]{MR2427530}. For $y\in Y$ and $x\in X$, the map $f^*$ is defined componentwise by \[(f^*)^y_x:K^M_n(\kappa(y))\rightarrow K^M_m(\kappa(x))\]
to be trivial unless $m=n$, $f(x)=y$, and $x\in X_y$ is a generic point. If these conditions are met, then one sets $(f^*)^y_x=\mathrm{length}(\mathcal{O}_{X_y,x})\cdot r_{\kappa(x)/\kappa(y)}$ where $r_{\kappa(x)/\kappa(y)}$ is the restriction map in Milnor $K$-theory. Note that the assumption $X$ is Noetherian is necessary for the definition of the pullback, since it implies that the fiber over any point $y\in Y$ has at most finitely many irreducible components. 

The flat pullback is functorial for flat morphisms of Noetherian schemes (see \cite[Proposition 49.18]{MR2427530} and use that flat maps lift generalizations \cite[\href{https://stacks.math.columbia.edu/tag/03HV}{Tag 03HV}]{stacks-project}). Moreover, given a Cartesian diagram \[\begin{tikzcd}X'=X\times_Y Y'\arrow["g'"]{d}\arrow["f'"]{r} & Y'\arrow["g"]{d} \\ X\arrow["f"]{r} & Y \end{tikzcd}\] such that the following conditions hold: \begin{enumerate}
	\item $Y'$ is Noetherian, 
	\item $g$ is flat, 
	\item $f$ is finite,
\end{enumerate} then it follows (from the proof of \cite[Proposition 49.20]{MR2427530}) that diagram \[\begin{tikzcd}C(X)\arrow["(g')^*"]{d}\arrow["f'_*"]{r} & C(Y)\arrow["g^*"]{d} \\ C(X')\arrow["f_*"]{r} & C(Y') \end{tikzcd}\] is commutative. One uses this observation to check the following:

\begin{lem}\label{lem: flatpull}
Let $X$ and $Y$ be separated and excellent schemes. Let $f:X\rightarrow Y$ be a flat morphism of schemes. Assume that $X$ is Noetherian. Assume also that $f$ is weakly catenarious. Then the flat pullback commutes with the endomorphisms $d_X$ and $d_Y$, i.e.\ $d_X\circ f^*= f^*\circ d_Y$.
\end{lem}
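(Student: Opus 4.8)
The plan is to check the identity $d_X\circ f^*=f^*\circ d_Y$ componentwise. Fix $y\in Y$ and $\alpha\in K^M_n(\kappa(y))$, and compare $d_X(f^*(\alpha))$ with $f^*(d_Y(\alpha))$ point by point. By definition $f^*(\alpha)$ is supported on the generic points $x_1,\dots,x_r$ of the fiber $X_y$, where its $x_i$-component is $\mathrm{length}(\mathcal{O}_{X_y,x_i})\cdot r_{\kappa(x_i)/\kappa(y)}(\alpha)$, so $d_X(f^*(\alpha))$ is supported on the codimension-one specializations of the $x_i$. On the other side, $d_Y(\alpha)$ is supported on the codimension-one specializations $y'$ of $y$, and $f^*(d_Y(\alpha))$ is then supported on the generic points of the fibers $X_{y'}$.

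First I would run the support analysis, and this is exactly where flatness and the weakly catenarious hypothesis are used. Because $f$ is flat, $\overline{X_y}=f^{-1}(\overline{\{y\}})$ (as in the proof of Lemma \ref{lem: flatcat}), so any $x'$ contributing to $d_X(f^*(\alpha))$ lies in some $\overline{\{x_i\}}$ and hence has $f(x')=y'$ a specialization of $y$. Applying the weakly catenarious condition to the irreducible component $\overline{\{x_i\}}$ of $f^{-1}(y)$ and to the codimension-one subset $\overline{\{x'\}}$ forces $\mathrm{codim}(\overline{\{y'\}},\overline{\{y\}})\leq 1$. If $y'=y$, then $x'$ still lies over $y$, so the valuation of $\kappa(x_i)$ cutting out $x'$ is trivial on $\kappa(y)$; since the component at $x_i$ is restricted from $\kappa(y)$, its residue vanishes, and these ``horizontal'' points contribute nothing. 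They also do not occur in $f^*(d_Y(\alpha))$, so both sides agree there, and the comparison is reduced to points $x'$ lying over codimension-one specializations $y'$ of $y$.

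Next I would reduce to a one-dimensional, and then to a regular, base. Fixing a codimension-one specialization $y'$, I localize to $Y=\Spec(\mathcal{O}_{\overline{\{y\}},y'})$, a one-dimensional excellent local domain, and pass to its normalization $\nu:\widetilde{Y}\to Y$; this is finite by excellence, and $\widetilde{Y}$ is a semilocal Dedekind scheme whose closed points carry precisely the discrete valuations entering the definition of $(d_Y)^y_{y'}$. Since $\nu$ is finite, hence proper, Lemma \ref{lem: properpush} lets me express $d_Y$ through $d_{\widetilde{Y}}$ and $\nu_*$; and forming the Cartesian square with vertices $X$, $Y$, $\widetilde{Y}$, and $\widetilde{X}=X\times_Y\widetilde{Y}$, the compatibility of proper pushforward with flat pullback recalled just before the lemma (with $\nu$ finite and $f$ flat) lets me commute $f^*$ past $\nu_*$. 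Together these reduce the desired identity to the case of a base that is the spectrum of a discrete valuation ring, where $d_X\circ f^*=f^*\circ d_Y$ is the standard compatibility of the residue map with restriction in Milnor $K$-theory underlying the construction of the flat pullback in \cite[\S49.D]{MR2427530}.

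I expect the main obstacle to be the multiplicity bookkeeping in this final reduction: one must match $\mathrm{length}(\mathcal{O}_{X_y,x_i})$ together with the ramification of the fiberwise valuations against $\mathrm{length}(\mathcal{O}_{X_{y'},x'})$ and the base valuations. The Cartesian-square compatibility is what encodes this correctly, but to apply it one must know that every generic point of $X_{y'}$ arises as a codimension-one specialization of some generic point of $X_y$ with the expected multiplicity; this follows from the dimension formula for the flat morphism $f$ combined with the local equidimensionality built into the definition of weakly catenarious. Once supports and multiplicities are aligned, the residue--restriction identity finishes the proof.
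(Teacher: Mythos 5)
Your proposal is correct and takes essentially the same route as the paper: the paper's proof is the same componentwise case analysis (flatness handles points not lying over $\overline{\{y\}}$, the weakly catenarious hypothesis kills the components where $\mathrm{codim}(\overline{\{y'\}},\overline{\{y\}})>1$), and for the remaining cases it simply cites \cite[Proposition 49.23]{MR2427530}, whose argument is precisely the localize--normalize--push-pull--DVR reduction (together with the vanishing of residues of classes restricted from $\kappa(y)$ at points lying over $y$) that you spell out. The only difference is that you make explicit what the paper delegates to the reference.
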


\begin{proof}
Let $y\in Y$ and $x'\in X$ be points. Set $y'=f(x')$. If $y'\notin\overline{\{y\}}$ then we have $(f^*\circ d_Y)_{x'}^y=0$, by the definition of $d_Y$, and $(d_X\circ f^*)^y_{x'}=0$ since $\overline{X_y}=f^{-1}(\overline{\{y\}})$ as $f$ is flat. This proves the claim in this case.

We can thus assume $y'\in \overline{\{y\}}$. There are three cases to consider: either $y'=y$ and $\mathrm{codim}(\overline{\{y'\}},\overline{\{y\}})=0$, or $\mathrm{codim}(\overline{\{y'\}},\overline{\{y\}})=1$, or $\mathrm{codim}(\overline{\{y'\}},\overline{\{y\}})>1$.

\underline{Case 1}: $\mathrm{codim}(\overline{\{y'\}},\overline{\{y\}})>1$. In this case, we have $(f^*\circ d_Y)_{x'}^y=0$ by the definition of $d_Y$. Since $f$ is assumed weakly catenarious, we also have $(d_X\circ f^*)_{y'}^x=0$.

\underline{Case 2}: $\mathrm{codim}(\overline{\{y'\}},\overline{\{y\}})\leq 1$. The proof of the claim for this case goes along the same lines as the proof given in \cite[Proposition 49.23]{MR2427530}.
\end{proof}

For any such scheme $X$ over a base field $k$, one can now check that the endomorphism $d_X$ is square-zero, i.e.\ $(d_X)^2=d_X\circ d_X=0$, following the proof of \cite[Proposition 49.30]{MR2427530} with minor modifications.

\begin{prop}
Let $X$ be a separated and excellent scheme over a field $k$. Then $(d_X)^2=0$.
\end{prop}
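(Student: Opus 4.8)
The plan is to show that every component of $(d_X)^2$ vanishes. For points $x,x''\in X$ the $(x,x'')$-component is the finite sum $\sum_{x'}(d_X)^{x'}_{x''}\circ (d_X)^{x}_{x'}$, taken over intermediate points $x'$, and by the definition of $d_X$ a summand can be nonzero only when $\mathrm{codim}(\overline{\{x'\}},\overline{\{x\}})=1$ and $\mathrm{codim}(\overline{\{x''\}},\overline{\{x'\}})=1$. Since $X$ is excellent it is catenary, so the existence of such an intermediate $x'$ forces $\mathrm{codim}(\overline{\{x''\}},\overline{\{x\}})=2$. Hence $((d_X)^2)^{x}_{x''}=0$ automatically unless $x''\in\overline{\{x\}}$ with $\mathrm{codim}(\overline{\{x''\}},\overline{\{x\}})=2$, and it remains only to treat these pairs.

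For such a pair every ingredient of $((d_X)^2)^{x}_{x''}$ — the intermediate points $x'$, the normalizations of the one-dimensional local rings, and the residue and norm maps — is determined by the local situation at $x''$ on $\overline{\{x\}}$. I would therefore replace $X$ by $\mathrm{Spec}(A)$, where $A=\mathcal{O}_{\overline{\{x\}},x''}$ (with reduced structure) is a two-dimensional excellent local domain with fraction field $\kappa(x)$ and residue field $\kappa(x'')$; here $x$ is the generic point, the admissible intermediate points $x'$ are precisely the height-one primes of $A$, and $x''$ is the closed point. This reduces the proposition to the single identity that the sum over all height-one primes of $A$ of the composite residue-then-norm maps vanishes on $K^M_n(\kappa(x))$ for every $n$.

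From here I would follow the proof of \cite[Proposition 49.30]{MR2427530}. That argument disposes of the combinatorics by pushing forward along the normalization $\mathrm{Spec}(\widetilde{A})\to\mathrm{Spec}(A)$ and using that $d$ commutes with proper pushforward, which in our setting is Lemma \ref{lem: properpush}; because the fraction field is unchanged, the induced map on generic components is the identity, so $(d_A)^2=0$ follows from $(d_{\widetilde{A}})^2=0$. The normalization map is finite, hence proper, precisely because $A$ is excellent \cite[\href{https://stacks.math.columbia.edu/tag/035R}{Tag 035R}]{stacks-project}, and this is the only place where excellence substitutes for the finite-type-over-a-field hypothesis used in \cite{MR2427530}. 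What remains is a reciprocity statement for the two-dimensional \emph{normal} local domains occurring as localizations of $\widetilde{A}$; this statement is internal to Milnor $K$-theory — it is here that the hypothesis that $X$ lies over a field $k$ enters — and it refers to nothing but the residue fields and their discrete valuations, so it transfers unchanged.

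The only genuine obstacle is the bookkeeping flagged in Remark \ref{rmk: a1}: in \cite{MR2427530} the nonzero components of $d_X$ are selected by the \emph{dimension} relation $\dim\overline{\{x\}}=\dim\overline{\{x'\}}+1$, whereas ours are selected by the \emph{codimension} relation $\mathrm{codim}(\overline{\{x'\}},\overline{\{x\}})=1$. I would verify that after the reduction these two conventions single out the same summands: in the two-dimensional catenary local domain $A$ the height-one primes are exactly its codimension-one points and the closed point is its unique codimension-two point, and the equicodimensionality of the finite normalization (as in Example \ref{exmp: normcat}) guarantees that this coincidence survives the pushforward step. Once this is checked, the proof of \cite[Proposition 49.30]{MR2427530} applies to our $d_X$ without further change.
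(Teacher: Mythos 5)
Your reductions match the paper's Steps 0 and 1: the catenary argument showing that only codimension-two pairs matter, the localization to a two-dimensional excellent local domain, and the passage to the normalization (finite by excellence) using Lemma \ref{lem: properpush} and the fact that the generic component is unchanged. The gap is in what you do next. You assert that the remaining reciprocity statement for two-dimensional \emph{normal} excellent local domains is ``internal to Milnor $K$-theory'' and therefore ``transfers unchanged'' from \cite[Proposition 49.30]{MR2427530}. But the cited argument does not prove that statement for such rings: its Step 1 treats $F[[x,y]]$, and its Step 2 treats finite covers of \emph{complete} local rings, where Cohen's structure theorem provides a finite map onto a power series ring $E[[x,y]]$ inducing a trivial residue field extension at the closed point. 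That triviality is what makes the pushforward argument conclusive --- norm maps are not injective, so $f_*\circ (d_X)^2=0$ alone proves nothing --- and a non-complete normal local domain admits no such map in general. So EKM's remaining steps simply do not run on the localizations of $\widetilde{A}$; completeness is not a dispensable convenience, and the reciprocity in question depends on the global structure of the two-dimensional ring, not merely on ``residue fields and their discrete valuations.''

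Consequently the paper inserts two further reductions that your proposal omits entirely. Step 2: pass from the normal local ring $S$ to its completion $\widehat{S}$. This is the genuinely delicate point in the general excellent setting: the completion map is flat but not of finite type, and commutation of $d$ with its pullback is exactly the content of Lemma \ref{lem: flatpull}, whose weakly catenarious hypothesis is verified by noting that $S$, being normal, local, and of dimension two, is Cohen--Macaulay, hence $\widehat{S}$ is equidimensional, so that Example \ref{exmp: localweakcat} applies. None of this machinery exists in \cite{MR2427530}, whose dimension-based conventions (Remark \ref{rmk: a1}) and finite-type hypotheses are precisely what fail here. Step 3: only now, with $\widehat{S}$ complete, does Cohen's structure theorem give a finite map to $\mathrm{Spec}(F[[x,y]])$ --- this is where the hypothesis that $X$ lies over a field actually enters, rather than inside a purely $K$-theoretic statement as you suggest --- and Lemma \ref{lem: properpush} then reduces to EKM's Step 1. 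Your final paragraph on the dimension-versus-codimension bookkeeping is fine as far as it goes, but it addresses the easy discrepancy, not this missing completion argument, which is the technical heart of the paper's proof.
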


\begin{proof}
We check that for any two points $x,x'\in X$ we have $(d_X\circ d_X)_{x'}^x=0$. 

\underline{Step 0}: Note that it suffices to assume $X=\mathrm{Spec}(\mathcal{O}_{\overline{\{x\}},\overline{\{x'\}}})$, i.e.\ we can assume that $X$ is a $2$-dimensional excellent local ring over a field $k$.

\underline{Step 1}: Up to replacing $X$ by its normalization $X^\nu$, we can assume that $X$ is a $2$-dimensional semi-local excellent and normal scheme over a field $k$. To see this, we use that $f:X^\nu\rightarrow X$ is finite, since $X$ is assumed excellent. Thus we can apply Lemma \ref{lem: fincat} to obtain $f_*\circ (d_{X^\nu})^2=(d_X)^2\circ f_*$. Lastly $(d_X)^2\circ f_*=(d_X\circ d_X)_{x'}^x$ since $X$ is assumed irreducible and local and $X$ and $X^\nu$ have the same fraction field. After possibly repeating \textit{Step 0}, we can assume again that $X$ is the spectrum of a local ring $(S,\mathfrak{m}_S)$ with $S$ normal.

\underline{Step 2}: Since $S$ is a normal, local, Noetherian ring with $\mathrm{dim}(\mathrm{Spec}(S))=2$, it follows $S$ is Cohen-Macaulay, see \cite[\href{https://stacks.math.columbia.edu/tag/0B3D}{Tag 0B3D}]{stacks-project}. Let $\widehat{S}$ be the completion of $S$ at $\mathfrak{m}_S$. 
Then the completion $\widehat{S}$ is also a Cohen-Macaulay, Noetherian, local ring by \cite[\href{https://stacks.math.columbia.edu/tag/07NX}{Tag 07NX}]{stacks-project}. Hence $\mathrm{Spec}(\widehat{S})$ is equidimensional with $\mathrm{dim}(\mathrm{Spec}(\widehat{S}))=2$. 

The map induced on spectra from the morphism $S\rightarrow \widehat{S}$ is flat and, by Example \ref{exmp: localweakcat}, weakly catenarious. By Lemma \ref{lem: flatpull}, it thus suffices to check the claim for $X=\mathrm{Spec}(\widehat{S})$.

\underline{Step 3}: As in \cite[Proposition 49.30, Step 2]{MR2427530}, one can reduce from the case of $X=\mathrm{Spec}(\widehat{S})$ to the case where $X=\mathrm{Spec}(F[[x,y]])$ for a field $F/k$ using Lemma \ref{lem: properpush}.

\underline{Step 4}: The proof for the scheme $X=\mathrm{Spec}(F[[x,y]])$ is given in \cite[Proposition 49.30, Step 1]{MR2427530}.
\end{proof}

Amazingly, we note that all of the above follows without keeping track of any additional gradings on the group $C(X)$ which the endomorphism $d_X$ may or may not respect.

Now write $X^{(p)}$ for the set of points $x\in X$ of codimension $p$. For any pair of integers $p,q\in \mathbb{Z}$ we write \[C^p_q(X):= \bigoplus_{x\in X^{(p)}} K_{q-p}(\kappa(x))\] for the given direct sum. Since $X$ is locally Noetherian, there is a decomposition $C(X)=\bigoplus_{(p,q)\in \mathbb{Z}^2} C^p_{q}(X)$ giving $C(X)$ the structure of a bigraded group. 

If $X$ is irreducible, or if $X$ has finite dimension and is biequidimensional, then $d_X$ has bidegree $(1,0)$ on $C(X)$ with this grading. Indeed, because $X$ is excellent, the underlying topological space of $X$ is catenary. Thus if $x,x'\in X$ are such that $\overline{\{x'\}}\subset \overline{\{x\}}$ and $\mathrm{codim}(\overline{\{x'\}}, \overline{\{x\}})=1$, then \[\mathrm{codim}(\overline{\{x'\}},X)=\mathrm{codim}(\overline{\{x\}},X)+\mathrm{codim}(\overline{\{x'\}},\overline{\{x\}})=\mathrm{codim}(\overline{\{x\}},X)+1\] by \cite[\href{https://stacks.math.columbia.edu/tag/02I6}{Tag 02I6}]{stacks-project} in the case $X$ is irreducible and by \cite[Proposition 5.3]{MR3631826} in the case $X$ has finite dimension and is biequidimensional.

\begin{defn}
	Let $k$ be a field. Let $X/k$ be a separated and excellent scheme. Assume that $X$ is either irreducible or finite dimensional and biequidimensional. For any $p,q\in \mathbb{Z}$, we define \textit{the degree-$(p,q)$ $K$-cohomology group} $A^p(X;K^M_q)$ to be the homology of the complex $C^*_q(X)$ induced by $d_X$ in degree $p$, i.e.\ \[A^p(X;K^M_q):=\HH\left(\bigoplus_{x\in X^{(p-1)}} K^M_{q-p+1}(\kappa(x))\rightarrow \bigoplus_{x\in X^{(p)}} K^M_{q-p}(\kappa(x)) \rightarrow \bigoplus_{x\in X^{(p+1)}} K^M_{q-p-1}(\kappa(x)) \right).\]
\end{defn}

This definition seems to be the most suitable for our purposes as the next corollary shows.

\begin{cor}\label{cor: gerst}
	Let $k$ be an infinite field and let $X$ be a separated, excellent, and regular scheme over $k$. Assume that $X$ is either irreducible or finite dimensional and biequidimensional. 
	
	Then there is an isomorphism \[\mathrm{H}^p(X,\mathcal{K}^M_q)\cong A^p(X;K^M_q)\] where $\mathcal{K}_q^M$ is the $q$th Milnor $K$-theory sheaf on $X$.
\end{cor}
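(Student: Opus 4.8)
The plan is to identify $A^p(X;K^M_q)$ with the $p$th Zariski cohomology group $\HH^p(X,\mathcal{K}^M_q)$ by exhibiting the complex $C^*_q(X)$ as a flasque resolution of the sheaf $\mathcal{K}^M_q$, which is exactly the content of the Gersten conjecture for Milnor $K$-theory. First I would invoke the local statement already recorded in Remark~\ref{rmk: gerstform}: since $X$ is regular, excellent, separated, and defined over an \emph{infinite} field $k$, the Gersten conjecture for Milnor $K$-theory holds for all the local rings $\mathcal{O}_{X,x}$ by Kerz's theorem \cite{MR2461425}. Concretely, this says that for each $x\in X$ the Cousin-type complex built from $d_X$ is exact except at the left end, where its cohomology computes $K^M_q(\mathcal{O}_{X,x})$.

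The key technical point is to reinterpret the bigraded complex $\bigl(C^*_q(X),d_X\bigr)$ defined above as the complex of global sections of a sheafified Gersten complex. For each $x\in X^{(p)}$ one forms the skyscraper sheaf $(i_x)_*K^M_{q-p}(\kappa(x))$ supported at $x$, and assembles the direct sums $\mathcal{C}^p_q:=\bigoplus_{x\in X^{(p)}}(i_x)_*K^M_{q-p}(\kappa(x))$ into a complex of Zariski sheaves with differential induced by $d_X$; the fact that $d_X$ is well-defined componentwise (using the norm and residue maps) and squares to zero — which we have already established in the preceding Proposition — guarantees this is genuinely a complex of sheaves. I would then check two things: that each $\mathcal{C}^p_q$ is flasque (skyscraper sheaves, and direct sums thereof over a Noetherian space, are flasque), and that the augmented complex
\[
0\rightarrow \mathcal{K}^M_q\rightarrow \mathcal{C}^0_q\rightarrow \mathcal{C}^1_q\rightarrow\cdots
\]
is exact as a complex of sheaves. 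Exactness is a stalk-local statement, and the stalk at $x$ is precisely the Gersten complex of $\mathcal{O}_{X,x}$, whose exactness is the local Gersten conjecture invoked above.

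Granting this, the conclusion is formal: a flasque resolution computes sheaf cohomology, so $\HH^p(X,\mathcal{K}^M_q)\cong \HH^p\bigl(\Gamma(X,\mathcal{C}^\bullet_q)\bigr)$, and by construction $\Gamma(X,\mathcal{C}^p_q)=C^p_q(X)$ with the differential agreeing with $d_X$, whence the right-hand side is exactly $A^p(X;K^M_q)$. The hypotheses that $X$ be irreducible or finite dimensional and biequidimensional enter precisely to ensure, as noted just before the Definition, that $d_X$ has bidegree $(1,0)$ and therefore that the codimension filtration $C^\bullet_q(X)$ is an honest cochain complex indexed by codimension; without this the identification of $A^p$ with the $p$th cohomology of a single complex would not typecheck.

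The main obstacle I anticipate is verifying the \emph{global} exactness of the sheafified complex in the generalized setting of this appendix rather than the classical finite-type-over-a-field case. The delicate step is that Kerz's resolution theorem is stated for regular local rings containing an infinite field, and one must confirm that the excellent, regular, possibly infinite-dimensional rings $\mathcal{O}_{X,x}$ arising here (e.g.\ the local rings of $X\times_k\Spec k[[t_1,\dots,t_r]]$ from Remark~\ref{rmk: gerstform}) satisfy these hypotheses and that the residue and norm maps used to build $d_X$ coincide, after restriction to each local ring, with the boundary maps in Kerz's Gersten complex. Once this compatibility of differentials is pinned down, the flasque-resolution argument runs without further friction.
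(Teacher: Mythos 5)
Your proposal is correct and takes essentially the same route as the paper: the paper's proof consists of observing that $d_X$ coincides with the differential of the Gersten resolution and then citing Kerz's exactness theorem \cite[Theorem 7.1]{MR2461425}, which is precisely the flasque-resolution argument you spell out in detail. The points you elaborate (sheafifying the complex into flasque terms, checking stalk-wise exactness against the local Gersten conjecture, and using the bidegree-$(1,0)$ property to make $C^*_q(X)$ an honest cochain complex) are exactly the standard unwinding of that one-line citation.
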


\begin{proof}
	The differential $d_X$ is the same as that coming from the Gersten resolution. The claim is immediate from the exactness of this resolution \cite[Theorem 7.1]{MR2461425}.
\end{proof}

\subsection{Functorality of $K$-cohomology}
We collect here some of the possible functorial transformations that are available for the $K$-cohomology groups. We do not use this material in the proofs of our main results, but it may be worthwhile to make note of it.

Let $X$ and $Y$ be separated, locally Noetherian, and excellent schemes. We say that a morphism $f:X\rightarrow Y$ has \textit{relative codimension $d$} if for any $p$, and for any point $x\in X^{(p)}$, the image $f(x)=y$ lies in $Y^{(p+d)}$.

\begin{exmp}\label{exmp: local}
	Any open immersion to $X$ has relative codimension 0. So does the canonical morphism from the spectrum of the local ring of a point $x\in X$ to $X$.
\end{exmp}

\begin{exmp}\label{exmp: subv}
	Suppose that $X$ is irreducible and that either of the following are true:
	\begin{enumerate}
		\item $Y$ is irreducible;
		\item $Y$ is finite dimensional and biequidimensional.
	\end{enumerate} Then, if $f:X\rightarrow Y$ is a closed immersion, it follows that $f$ must have relative codimension $d=\mathrm{codim}(X,Y)$. To see this, first assume $Y$ is irreducible. If $x\in X^{(p)}$ and $Z=\overline{\{x\}}$ then we have  \[\mathrm{codim}(Z,X)+\mathrm{codim}(X,Y)=\mathrm{codim}(Z,Y)\] by \cite[\href{https://stacks.math.columbia.edu/tag/02I6}{Tag 02I6}]{stacks-project} since $Y$ is assumed catenary (which follows from being excellent).
	
	Now assume that $Y$ is finite dimensional and biequidimensional. Then the same equality on codimension as above follows from \cite[Proposition 2.3]{MR3631826}.
\end{exmp}

\begin{exmp}\label{exmp: normv}
	Suppose that $Y$ is an integral scheme and $f:Y^\nu\rightarrow Y$ is the normalization morphism of $Y$. Then $f$ has relative codimension 0. To see this, let $x\in (Y^\nu)^{(p)}$ and set $y=f(x)$. Assume that $y\in Y^{(p')}$. Then there exists a chain $\mathcal{C}$ of irreducible closed subsets of $Y$ of length $p'$ and the going down theorem guarantees a chain $\mathcal{D}$ of irreducible closed subsets of $Y^{\nu}$ of length $p'$, starting with $x$, which maps to $\mathcal{C}$ under $f$; so $p\geq p'$. Conversely, if $\mathcal{D}$ is a chain of closed subsets of $Y^{\nu}$ of length $p$ containing $x$, then $\mathcal{C}=f(\mathcal{D})$ is a chain of closed subsets of $Y$ of length $p$ containing $y$ by \cite[\href{https://stacks.math.columbia.edu/tag/00GT}{Tag 00GT}]{stacks-project}. So $p\leq p'$.
\end{exmp}

\begin{exmp}\label{exmp: finiteflat}
	Suppose that $X$ and $Y$ have finite dimension and are biequidimensional. Then every finite, flat, and surjective morphism $f:X\rightarrow Y$ is of relative codimension 0. Indeed, let $y\in Y$ be a point and set $Z=\overline{\{y\}}$. Then \[\mathrm{codim}(Z,Y)=\mathrm{codim}(f^{-1}(Z),X)\] by \cite[Corollaire 6.1.4]{MR0199181}. Now let $W\subset f^{-1}(Z)$ be an irreducible component. Then $W$ dominates $Z$ since generalizations lift along the flat morphism $f$ (see \cite[\href{https://stacks.math.columbia.edu/tag/03HV}{Tag 03HV}]{stacks-project}). Since $f$ is finite (so closed), this implies $W$ surjects onto $Z$. Hence by \cite[\href{https://stacks.math.columbia.edu/tag/0ECG}{Tag 0ECG}]{stacks-project}, we have $\mathrm{dim}(W)=\mathrm{dim}(Z)$ and also $\mathrm{dim}(X)=\mathrm{dim}(Y)$. Now $\mathrm{codim}(W,X)=\mathrm{codim}(Z,Y)$ follows from \cite[Proposition 2.3]{MR3631826}.
\end{exmp}

If $f:X\rightarrow Y$ is a proper morphism of relative codimension $d$ (which necessarily implies that $f$ is finite), then the proper pushforward induces a pushforward map
\[f_*:C^p_q(X)\rightarrow C^{p+d}_{q+d}(Y)\] for any $p,q\in \mathbb{Z}$. In other words, the proper pushforward of a morphism of relative codimension $d$ is a morphism of bidegree $(d,d)$ between the bigraded groups $C(X)$ and $C(Y)$. If $X$ and $Y$ are two schemes over a field $k$, and if both $X$ and $Y$ are either irreducible or of finite dimension and biequidimensional, then by Lemma \ref{lem: properpush} this induces a pushforward map \[f_*:A^p(X;K^M_q)\rightarrow A^{p+d}(Y;K^M_{q+d})\] which is functorial for compositions of proper morphisms of fixed relative codimension.

Now assume that $X$ is, moreover, a Noetherian scheme. We say a morphism $f:X\rightarrow Y$ has \textit{continuous codimension} if for any point $y\in Y^{(p)}$, every generic point $x$ of the fiber $f^{-1}(y)$ satisfies $x\in X^{(p)}$. We allow for the possibility that the fiber $f^{-1}(y)$ is empty.

\begin{exmp}\label{exmp: formalfibers}
	Let $\mathrm{Spec}(R)$ be the spectrum of a $2$-dimensional regular local ring $R$ with maximal ideal $\mathfrak{m}$ and let $\mathrm{Spec}(\widehat{R})$ be the spectrum of the completion of $R$ at $\mathfrak{m}$. Then the canonical map \[\Lambda:\mathrm{Spec}(\widehat{R})\rightarrow \mathrm{Spec}(R)\] is flat and of continuous codimension.
	
	To see this, one can work in cases, checking points of the fiber over a point $\mathfrak{p}\subset R$ with $\mathrm{ht}(\mathfrak{p})=0,1,2$. For $\mathrm{ht}(\mathfrak{p})=2$ (so $\mathfrak{m}=\mathfrak{p}$), if there is prime $\mathfrak{q}\subset \widehat{R}$ with $\mathfrak{q}\cap R=\mathfrak{p}$ then $\mathfrak{q}$ can't be $(0)$, so it has height either one or two. By the going down theorem, the height also can't be one. Hence $\mathfrak{q}$ is the maximal ideal of $\widehat{R}$. For $\mathrm{ht}(\mathfrak{p})=1$, the only possibility for the fiber are primes of height one as well. Lastly, the generic fiber of $\Lambda$ always contains the generic point of the domain, hence the claim.
\end{exmp}

\begin{exmp}\label{exmp: contlocal}
	Any morphism of relative codimension $0$ has continuous codimension. The converse is not true, however, as the projection morphism from a variety of positive dimension gives a counterexample.
	
	For a more interesting example of a morphism which has continuous codimension but is not of relative codimension $0$, take $R=k[x,y]_{(x,y)}$ and let $\widehat{R}$ be the completion of $R$ at $\mathfrak{m}=(x,y)$. Then the morphism of schemes associated to the inclusion $R\subset \widehat{R}$ fails to have relative codimension $0$ by \cite[Theorem 2]{MR0977763} but, this morphism does have continuous codimension by the previous example.
\end{exmp}

The flat pullback along a flat morphism $f:X\rightarrow Y$ of continuous codimension induces a pullback \[f^*:C_q^p(Y)\rightarrow C_q^p(X)\] for any $p,q\in \mathbb{Z}$. Thus the flat pullback along a morphism $f^*$ induces a bigraded morphism of degree $(0,0)$ between the bigraded groups $C(Y)$ and $C(X)$. If $X$ and $Y$ are two schemes over a field $k$, and if both $X$ and $Y$ are also either irreducible or of finite dimension and biequidimensional, then such maps are weakly catenarious and, by Lemma \ref{lem: flatpull}, there is an induced pullback morphism \[f^*:A^p(Y;K^M_q)\rightarrow A^p(X;K^M_q)\] which is functorial for compositions of flat morphisms of continuous codimension between Noetherian schemes.

A priori, it isn't clear that morphisms of fixed relative codimension or of continuous codimension are stable under base change. The following gives a special case of this.

\begin{lem}\label{lem: basechange}
	Suppose that $f:X\rightarrow Y$ is a flat morphism with continuous codimension. Let $\pi:Y'\rightarrow Y$ be a morphism of relative codimension $d$ and assume that the projection $\pi':X'=X\times_Y Y'\rightarrow X$ also has relative codimension $d$. Then the base change $f':X'\rightarrow Y'$ is a flat morphism of continuous codimension.
	
	Moreover, if $\pi$ is finite and if $X$ and $X'$ are Noetherian, then the diagram \begin{equation}\label{eq: trans}\begin{tikzcd} C^p_q(Y')\arrow["(f')^*"]{r}\arrow["\pi_*"]{d}& C^p_q(X')\arrow["\pi'_*"]{d} \\ C_{q+d}^{p+d}(Y) \arrow["f^*"]{r} & C_{q+d}^{p+d}(X) \end{tikzcd}\end{equation} is commutative.
\end{lem}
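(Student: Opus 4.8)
The plan is to prove the three assertions in the order they are stated: flatness of $f'$, continuity of codimension for $f'$, and then the commutativity of \eqref{eq: trans}. Flatness is immediate, since flat morphisms are stable under base change, so the first substantive step is to understand the fibers of $f'$. For a point $y'\in Y'$ with image $y=\pi(y')$, the relation $f\circ\pi'=\pi\circ f'$ together with the Cartesian property identifies the fiber of $f'$ over $y'$ with the base change of the fiber of $f$ over $y$ along the residue-field extension $\kappa(y)\hookrightarrow\kappa(y')$; concretely, $(f')^{-1}(y')\cong f^{-1}(y)\times_{\kappa(y)}\kappa(y')$, and the projection onto $f^{-1}(y)$ is the base change of the faithfully flat map $\Spec\kappa(y')\to\Spec\kappa(y)$, hence itself faithfully flat.

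To show $f'$ has continuous codimension, I would fix $y'\in (Y')^{(p)}$ and a generic point $x'$ of $(f')^{-1}(y')$ and argue that $x'\in (X')^{(p)}$. Since $\pi$ has relative codimension $d$, the point $y=\pi(y')$ lies in $Y^{(p+d)}$. As the projection $(f')^{-1}(y')\to f^{-1}(y)$ is faithfully flat it lifts generalizations, so the image $x=\pi'(x')$ of the maximal point $x'$ has no proper generalization in $f^{-1}(y)$ either; that is, $x$ is a generic point of $f^{-1}(y)$. Because $f$ has continuous codimension, this forces $x\in X^{(p+d)}$. Finally, relative codimension $d$ of $\pi'$ says that $x'\in (X')^{(p')}$ implies $\pi'(x')=x\in X^{(p'+d)}$; comparing with $x\in X^{(p+d)}$ and using that the codimension of $x$ is a well-defined integer gives $p'=p$, as required.

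For the commutativity of \eqref{eq: trans}, the idea is to reduce to the base-change compatibility between flat pullback and proper pushforward recorded earlier (following the proof of \cite[Proposition 49.20]{MR2427530}), applied to the very same Cartesian square but read with the two factors transposed: the finite maps are now the vertical arrows $\pi$ and $\pi'$ (with $\pi'$ finite as the base change of the finite $\pi$), and the flat maps are the horizontal arrows $f$ and $f'$. The hypotheses needed there—one base scheme Noetherian, one parallel pair finite, the other flat—are supplied by $X$ Noetherian, $\pi$ (hence $\pi'$) finite, and $f$ (hence $f'$) flat, with $X'$ Noetherian ensuring that $(f')^*$ is defined. This produces the identity $f^*\circ\pi_*=\pi'_*\circ(f')^*$ on the ungraded complexes $C(-)$. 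To land in the graded pieces of \eqref{eq: trans}, I would then invoke the bidegrees of the four maps: $(f')^*$ and $f^*$ are of bidegree $(0,0)$ by continuity of codimension (using the first part of the lemma for $f'$), while $\pi_*$ and $\pi'_*$ are of bidegree $(d,d)$ by relative codimension $d$, so restricting the ungraded identity to the summand $C^p_q(Y')$ yields exactly the claimed commutative square, both composites mapping $C^p_q(Y')\to C^{p+d}_{q+d}(X)$.

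I expect the main obstacle to be the continuous-codimension step rather than the commutativity. The delicate points there are that the fiber identification must be recognized as a base change along a genuine field extension, and that "generic point maps to generic point" must be justified through lifting of generalizations along the faithfully flat projection; it is precisely the exactness of the relative-codimension bookkeeping—equalities of codimension, not mere inequalities—that then pins down $p'=p$. By contrast, once the earlier base-change statement is transposed and its hypotheses are matched, the commutativity should follow essentially formally.
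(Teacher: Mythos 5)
Your proposal is correct and takes essentially the same approach as the paper: the fiber analysis rests on generalization-lifting along flat maps plus the codimension bookkeeping built into the definitions of continuous and relative codimension, and the commutativity of the square is obtained exactly as in the paper, by restricting the ungraded base-change identity (the transposed application of the finite-pushforward/flat-pullback compatibility recorded just before the lemma) to the graded pieces. The only organizational difference is that you get the codimension equality in one step, by noting that the faithfully flat projection $(f')^{-1}(y')\to f^{-1}(y)$ sends generic points to generic points, whereas the paper derives the two inequalities $\mathrm{codim}\geq p$ and $\mathrm{codim}\leq p$ separately; your version is, if anything, slightly tidier, since it avoids the paper's loosely worded claim that the lifted point $u'$ specializes from \emph{any} generic point of the fiber.
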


\begin{proof}
	Let $y\in (Y')^{(p)}$ be a point. Because generalizations lift along flat maps \cite[\href{https://stacks.math.columbia.edu/tag/03HV}{Tag 03HV}]{stacks-project}, for any $w\in(f')^{-1}(y)$ we have $\mathrm{codim}(\overline{\{w\}}, X')\geq p$.
	
	Let $z=\pi(y)\in Y^{(p+d)}$ and let $u\in f^{-1}(z)$ be any generic point so that $u\in X^{(p+d)}$. Then there is a point $u'\in (f')^{-1}(y)$ which maps to $u$ by base change. Since $\pi'$ has relative codimension $d$, we must have $u'\in (X')^{(p)}$. Now $u'$ is a specialization of any generic point $w$, so $\mathrm{codim}(\overline{\{w\}}, X')\leq p$ as well.
	
	That the diagram (\ref{eq: trans}) is commutative follows from the fact that it is induced from the corresponding commutative diagram between the ungraded groups $C(Y'), C(X'), C(Y), C(X)$.
\end{proof}

Under the assumptions of Lemma \ref{lem: basechange}, we thus find the commuting square \begin{equation}\begin{tikzcd} A^p(Y';K^M_q)\arrow["(f')^*"]{r} \arrow["\pi_*"]{d} & A^p(X';K_q^M)\arrow["\pi'_*"]{d}\\ A^{p+d}(Y;K^M_{q+d})\arrow["f^*"]{r} & A^{p+d}(X;K^M_{q+d})\end{tikzcd}\end{equation} granted that all of these groups are defined.

\subsection{Relative $K$-homology}
Now we set-up a relative theory similar to \cite[\S20.1]{MR1644323}. We fix throughout the following a separated, Noetherian, excellent scheme $S$ of finite dimension defined over a fixed field $k$.

Let $X$ be a scheme that's separated and of finite type over $S$. We write $(X/S)_{(p)}$ for the set of points of $X$ whose closure has relative dimension $p$ over $S$. Following \cite[\S20.1]{MR1644323}, we say that an integral subscheme $V\subset X$ has relative dimension $\mathrm{dim}_S(V)\in \mathbb{Z}$ where \[\mathrm{dim}_S(V)=\mathrm{tr}.\mathrm{deg}(R(V)/R(T))-\mathrm{codim}(T,S);\] here $T$ is the closure of the image of $V$ in $S$ and $R(V),R(T)$ are the corresponding function fields.

We write \[C_{p,q}(X/S):= \bigoplus_{x\in (X/S)_{(p)}} K^M_{p+q}(\kappa(x)).\] The decomposition $C(X)=\bigoplus_{p,q\in \mathbb{Z}} C_{p,q}(X/S)$ gives $C(X)$ the structure of a bigraded group. According to \cite[Lemma 20.1 (2)]{MR1644323}, the endomorphism $d_X$ has degree $(-1,0)$ on $C(X)$ with this grading.

\begin{defn}
We define \textit{the degree-$(p,q)$ relative $K$-homology} group $A_p(X/S;K^M_q)$ as the homology of the complex $C_{*,q}(X/S)$ induced by the differential $d_X$ in degree $p$, i.e.\ \[A_p(X/S;K^M_q):=\HH\left(\bigoplus_{x\in (X/S)_{(p+1)}} K^M_{p+q+1}(\kappa(x))\rightarrow \bigoplus_{x\in (X/S)_{(p)}} K^M_{p+q}(\kappa(x)) \rightarrow \bigoplus_{x\in (X/S)_{(p-1)}} K^M_{p+q-1}(\kappa(x)) \right).\] Note that we don't assume $X$ is irreducible or finite dimensional and biequidimensional in this definition.
\end{defn}

\begin{rmk}
Note that the set $(X/S)_{(p)}$ is not necessarily empty if $p<0$ and so $A_p(X/S;K^M_q)$ does not necessarily vanish for $p<0$. For example, if $X$ is a positive dimensional projective variety over a field $k$, then $A_p(X/X;K^M_{-p})\cong \mathrm{CH}_{\mathrm{dim}(X)+p}(X)$ which is never zero for $0\geq p \geq -\mathrm{dim}(X)$.

As another example, if we consider $T$ from Remark \ref{rmk: a1} as a scheme over the spectrum of $k[[x]]$, then the maximal ideal $\mathfrak{m}'=(x,y)$ defines a closed subscheme of $T$ with relative dimension $\mathrm{dim}_{k[[x]]}(\mathfrak{m}')=-1$. For comparison, we also have $\mathrm{dim}_{k[[x]]}(T)=1$ and $\mathrm{dim}_{k[[x]]}(\mathfrak{m})=0$.
\end{rmk}

If $f:X\rightarrow Y$ is a proper $S$-morphism between two schemes $X$ and $Y$ which are both of finite type over $S$, then the proper pushforward $f_*:C(X)\rightarrow C(Y)$ respects the grading by relative dimension by \cite[Lemma 20.1 (3)]{MR1644323}. In particular, there are induced pushforward morphisms \[f_*:C_{p,q}(X)\rightarrow C_{p,q}(Y).\] By Lemma \ref{lem: properpush}, these pushforwards induce functorial pushforward morphisms \[f_*:A_p(X/S;K^M_q)\rightarrow A_p(Y/S;K^M_q)\] for any $p,q\in \mathbb{Z}$. 

If $f:X\rightarrow Y$ is, instead, a flat $S$-morphism of relative dimension $d$ between finite type schemes over $S$, then the flat pullback $f^*:C(Y)\rightarrow C(X)$ induces a morphism \[f^*:C_{p,q}(Y)\rightarrow C_{p+d,q-d}(X).\] Here a flat morphism $f:X\rightarrow Y$ is said to have relative dimension $d$ if for every $y\in Y$ the fiber $X_y$ is either empty or equidimensional with $\mathrm{dim}(X_y)=d$. To see that $f^*$ has target as claimed, it suffices to assume that $X$ and $Y$ are both integral. The claim then follows from \cite[Lemma 20.1 (3)]{MR1644323}. Any such morphism $f$ is weakly catenarious, thus there are functorial flat pullback morphisms \[f^*:A_p(Y/S;K_q^M)\rightarrow A_{p+d}(X/S;K^M_{q-d})\] for morphisms of fixed relative dimension by Lemma \ref{lem: flatpull}. 

\begin{rmk}\label{rmk: flatreldim}
	Consider the scheme $T$ from Remark \ref{rmk: a1}. Then $T$ can be canonically realized as an open subscheme of $\mathbb{P}^1_{k[[x]]}$ as the complement of infinity. The point $\mathfrak{m}\in T$ dominates a closed subscheme $W\subset\mathbb{P}^1_{k[[x]]}$ with $\mathrm{dim}(W)=1$. So, if one uses the definition of a flat morphism of constant relative dimension as given in \cite[\S49.D]{MR2427530}, then the open immersion of $T$ into $\mathbb{P}^1_{k[[x]]}$ is not of constant relative dimension.
	
	In contrast, with the definition we've given above, every open immersion of finite type schemes over $S$ is of constant relative dimension $0$ (and, in the above example, $\mathrm{dim}_{k[[x]]}(W)=0$).
\end{rmk}

\begin{rmk}\label{rmk: cohho}
Note that if $X$ is irreducible and of finite type over $S$ with relative dimension $\dim_S(X)=n$, then there are equalities \[C^p_q(X)=C_{n-p,q-n}(X/S)\quad \mbox{and} \quad A^p(X;K^M_q)=A_{n-p}(X/S;K^M_{q-n})\] for any $p,q\in \mathbb{Z}$ by \cite[Lemma 20.1 (2)]{MR1644323}.
\end{rmk}

Given an open subscheme $U\subset X$ of finite type over $S$ with closed complement $Z\subset X$, there is an associated long exact localization sequence in relative $K$-homology. The projective bundle formula holds in relative $K$-homology. Gysin morphisms are also defined for local complete intersection morphisms between schemes of finite type over $S$ which factor as the composition of a regular closed embedding and a smooth morphism. We couldn't find a reference for the blow-up formula in $K$-(co)homology, but we want to use it, so we include it below in this relative setting.

\begin{prop}\label{prop: blowup}
Let $S$ be a separated, Noetherian, excellent scheme of finite dimension over a field $k$. Let $X/S$ be a scheme smooth over $S$. Let $Z$ be either an irreducible or biequidimensional scheme over $S$ and let $i:Z\rightarrow X$ be a regular closed embedding of codimension $d$ over $S$. Then there are isomorphisms \[ A_p(\mathrm{Bl}_Z(X)/S; K_q^M)\cong A_p(X/S; K_q^M)\oplus \left(\bigoplus_{i=1}^{d-1} A_{p-i}(Z/S;K_{q+i}^M)\right)\] for every $p,q\in \mathbb{Z}$.
\end{prop}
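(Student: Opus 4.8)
The plan is to follow Fulton's proof of the blow-up formula for Chow groups \cite[Proposition 6.7]{MR1644323}, replacing Chow groups throughout by relative $K$-homology and feeding in the localization sequence, the projective bundle formula, and the Gysin pullback recorded in the preceding subsections. Write $X'=\mathrm{Bl}_Z(X)$, let $E=\mathbb{P}(\mathcal{N}_{Z/X})$ be the exceptional divisor, $g\colon E\rightarrow Z$ its projective bundle structure (a $\mathbb{P}^{d-1}$-bundle, since $\mathcal{N}_{Z/X}$ is locally free of rank $d$), $j\colon E\hookrightarrow X'$ the inclusion, and $\pi\colon X'\rightarrow X$ the blow-down. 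The case $d=1$ is trivial, so assume $d\geq 2$. Because $i$ is a regular embedding, $\pi$ is a local complete intersection morphism of relative dimension $0$, factoring as a regular closed immersion $X'\hookrightarrow\mathbb{P}^{d-1}_X$ followed by the smooth projection $\mathbb{P}^{d-1}_X\rightarrow X$; hence the Gysin pullback $\pi^*$ of the previous subsection is defined, and since $\pi$ is proper and birational one has $\pi_*\pi^*=\mathrm{id}$. Thus $\pi^*$ is split injective and
\[A_p(X'/S;K_q^M)=\pi^*A_p(X/S;K_q^M)\ \oplus\ \ker(\pi_*).\]
The projective bundle formula writes $A_p(E/S;K_q^M)$ as a sum of $d$ summands whose top one is the image of a section $\psi$ of $g_*$ (cutting with the top power of $c_1(\mathcal{O}_E(1))$), so that $A_p(E/S;K_q^M)=\psi(A_p(Z/S;K_q^M))\oplus Q$ with complement $Q:=\ker(g_*)\cong\bigoplus_{i=1}^{d-1}A_{p-i}(Z/S;K_{q+i}^M)$.

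With this in place the theorem reduces to the claim that proper pushforward restricts to an isomorphism $j_*\colon Q=\ker(g_*)\xrightarrow{\ \sim\ }\ker(\pi_*)$: granting it, the splitting above yields $A_p(X'/S;K_q^M)\cong A_p(X/S;K_q^M)\oplus Q$, which is the assertion after reindexing. That $j_*$ carries $Q$ into $\ker(\pi_*)$ is formal, since $\pi_*j_*=i_*g_*$ and $g_*(Q)=0$ give $\pi_*j_*q=0$ for $q\in Q$.

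For injectivity I would argue purely from the two localization sequences attached to $Z\subset X\supset U$ and $E\subset X'\supset U$, where $U=X\setminus Z\cong X'\setminus E$ is the common open complement. If $q\in Q$ and $j_*q=0$, then $q=\partial(u)$ for the boundary $\partial\colon A_{p+1}(U/S;K_q^M)\to A_p(E/S;K_q^M)$; applying $g_*$ and using the compatibility $g_*\circ\partial=\partial'$ of the two boundary maps (functoriality of localization under the proper map $\pi$) together with $g_*q=0$ shows $\partial'(u)=0$, so $u\in\ker(\partial')=\operatorname{im}(\sigma)$ for the restriction $\sigma$ of $X$. Since $\pi^*$ is compatible with restriction to the open $U$ one has $\sigma=\rho\circ\pi^*$, whence $u\in\operatorname{im}(\rho)$; but then $q=\partial(u)\in\partial(\operatorname{im}\rho)=0$ because $\partial\circ\rho=0$ along the localization sequence for $X'$. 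Surjectivity is where the geometry enters: for $w\in\ker(\pi_*)$, vanishing of its restriction to $U$ puts $w=j_*e$ in $\operatorname{im}(j_*)$, and $\pi_*j_*=i_*g_*$ then forces $g_*e\in\ker(i_*)$; decomposing $e=\psi(g_*e)+q'$ with $q'\in Q$ and invoking the self-intersection formula $\pi^*i_*=j_*\psi$ kills the first term, leaving $w=j_*q'\in j_*(Q)$.

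The one genuinely new ingredient, and the step I expect to be the main obstacle, is the self-intersection (key) formula $\pi^*i_*=j_*\psi$ in relative $K$-homology: it asserts that the Gysin pullback along the blow-down of a class coming from the center is computed by the top piece of the projective bundle decomposition of the exceptional divisor. In Fulton's framework this is the excess-intersection computation for the Cartesian square relating $i,\pi,j,g$, whose excess bundle is the rank-$(d-1)$ quotient of $g^*\mathcal{N}_{Z/X}$ by the tautological subbundle $\mathcal{N}_{E/X'}=\mathcal{O}_E(-1)$. Reproving it here amounts to checking that the Gysin and excess formalism of the appendix behaves as in the classical case and, in particular, to tracking the Milnor weight shifts carried by the powers of $c_1(\mathcal{O}_E(1))$ defining $\psi$; once this is confirmed, every other step above is a formal consequence of the localization sequence and the projective bundle formula already established.
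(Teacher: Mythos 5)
Your reduction to the claim that $j_*\colon\ker(g_*)\to\ker(\pi_*)$ is an isomorphism, and your localization-sequence arguments for injectivity and surjectivity, are sound: the compatibilities you invoke (the commuting ladder of localization sequences under the proper map $\pi$, and compatibility of $\pi^*$ with restriction to $U$) are genuinely available here, because proper pushforward is a map of cycle complexes and the localization sequence comes from a short exact sequence of complexes. The genuine gap is the step you dispose of in one clause: $\pi_*\pi^*=\mathrm{id}$ ``since $\pi$ is proper and birational.'' In this relative cycle-module setting that identity is not a formal consequence of properness and birationality; for Chow groups it is part of \cite[Proposition 6.7]{MR1644323}, and Fulton's proof of it is geometric --- represent the class by a subvariety $V$, compare $\pi^*[V]$ with the strict transform, and kill the error term by a dimension count over $V\cap Z$, using the key formula when $V\subset Z$. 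A class in $A_p(X/S;K^M_q)$ carries Milnor $K$-theory coefficients and is not represented by a subvariety, so this argument does not transfer verbatim, and nothing in the appendix (which develops no products, projection formula, or refined Gysin maps) supplies it. The step is load-bearing: without it you have no splitting $A_p(\mathrm{Bl}_Z(X)/S;K^M_q)=\pi^*A_p(X/S;K^M_q)\oplus\ker(\pi_*)$, only the isomorphism $\ker(\pi_*)\cong\ker(g_*)$. The paper's proof is arranged precisely to avoid this: it builds the Mayer--Vietoris-type exact sequence directly from the ladder of localization sequences (the left square commuting by the excess-intersection formula, imported from \cite[Proposition 55.3]{MR2427530}) and splits that sequence on the \emph{left}, via the projective-bundle computation $g_*(e(F)\cap g^*\alpha)=\alpha$, never using $\pi_*\pi^*=\mathrm{id}$. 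Your argument can be repaired in the same spirit with inputs you already have: show directly that $(\beta,q)\mapsto\pi^*\beta+j_*q$ is bijective from $A_p(X/S;K^M_q)\oplus\ker(g_*)$. For surjectivity, $\partial'(\rho(w))=g_*\partial(\rho(w))=0$ lets you lift $\rho(w)$ through $\sigma$ and then proceed as in your last step; for injectivity, $\sigma(\beta)=0$ forces $\beta=i_*\gamma$, the key formula and $g_*$ give $\gamma=\partial'(u)$ for some $u$, so $\beta=i_*\partial'(u)=0$, and then $q=0$ by your own injectivity argument.

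A second, smaller error: your section $\psi$ of $g_*$ must be $\alpha\mapsto e(F)\cap g^*\alpha$, the Euler class of the excess bundle $F=g^*\mathcal{N}_{Z/X}/\mathcal{O}_E(-1)$, not $\alpha\mapsto c_1(\mathcal{O}_E(1))^{d-1}\cap g^*\alpha$ as you define it. Both are sections of $g_*$, since the terms of $c_{d-1}(F)$ involving positive Chern classes of $\mathcal{N}_{Z/X}$ push forward to zero, so your direct-sum decomposition of $A_p(E/S;K^M_q)$ is unaffected; but the key formula $\pi^*i_*=j_*\circ\psi$, on which your surjectivity step relies, holds only for the excess section, because $c_{d-1}(F)=\sum_{i\geq 0}g^*c_i(\mathcal{N}_{Z/X})\cdot c_1(\mathcal{O}_E(1))^{d-1-i}$ differs from $c_1(\mathcal{O}_E(1))^{d-1}$ whenever $\mathcal{N}_{Z/X}$ has nontrivial Chern classes. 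You do correctly identify this key formula as the essential new input --- it is exactly what the paper cites from \cite[Proposition 55.3]{MR2427530} to make its ladder commute --- but the version you state is the wrong one.
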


\begin{proof}
Let $\pi: \mathrm{Bl}_Z(X)\rightarrow X$ be the blow-down map over $S$. Note $\pi$ is a projective local complete intersection morphism. In particular $\pi$ factors into the composition of maps (for some not necessarily unique $r>0$) \[\pi:\mathrm{Bl}_Z(X)\xrightarrow{i} \mathbb{P}^r_S\times_S X\xrightarrow{p} X.\] In this setting, there is a pullback $\pi^*: A_p(X/S; K_q^M)\rightarrow A_p(\mathrm{Bl}_Z(X)/S; K_q^M)$ defined as $\pi^*=i^!\circ p^*$ where $i^!$ is the associated Gysin map and $p^*$ is the flat pullback. Let $\psi:E\rightarrow Z$ be the projective bundle of the normal bundle associated to the immersion $Z\rightarrow X$ and write $j:E\rightarrow \mathrm{Bl}_Z(X)$ for the induced immersion. By base change, the map $\psi$ factors \[\psi:E\xrightarrow{j} \mathbb{P}_S^r\times_S Z\xrightarrow{q} Z\] and $\psi^*=j^!\circ q^*$ is defined as well. Lastly, we let $U=X\setminus Z$ and $U'=\mathrm{Bl}_Z(X)\setminus E$.

We then have the following commutative ladder of long exact localization sequences. \[
\begin{tikzcd}[column sep =small] \cdots \arrow{r} & A_{p}(E/S;K_q^M)\arrow{r} & A_{p}(\mathrm{Bl}_Z(X)/S;K_q^M)\arrow{r} & A_{p}(U'/S;K_q^M)\arrow{r} & A_{p-1}(E/S;K_q^M)\arrow{r} & \cdots\\
\cdots \arrow{r} & A_{p}(Z/S;K_q^M)\arrow{r}\arrow["e(F)\circ \psi^*"]{u} & A_{p}(X/S;K_q^M)\arrow{r}\arrow["\pi^*"]{u} & A_{p}(U/S;K_q^M)\arrow{r}\arrow[equals]{u} & A_{p-1}(Z/S;K_q^M)\arrow{r}\arrow["\psi^*"]{u} & \cdots \end{tikzcd}\] Here $e(F)$ is the Euler class of the excess intersection bundle $F$ associated to the square with horizontal maps $j$ and $i$, cf.\ \cite[Proposition 55.3]{MR2427530}. The bundle $F$ can, moreover, be identified with the universal quotient sheaf of the projective bundle $\psi$ by the arguments of \cite[\href{https://stacks.math.columbia.edu/tag/0FV9}{Tag 0FV9}]{stacks-project} and \cite[\href{https://stacks.math.columbia.edu/tag/0FVA}{Tag 0FVA}]{stacks-project}.

One checks that the following standard sequence associated to the above diagram is exact.
\[\cdots\rightarrow A_p(Z/S;K_q^M)\xrightarrow{(e(F)\circ \psi^*,-i_*)}A_p(E/S;K_q^M)\oplus A_p(X/S;K_q^M)\xrightarrow{j_*+\pi^*} A_p(\mathrm{Bl}_Z(X)/S;K_q^M)\rightarrow A_{p-1}(Z/S;K_q^M)\rightarrow \cdots\] The composition $\psi_*\circ e(F)\circ \psi^*$ is the identity and so this sequence breaks into short exact sequences \[0\rightarrow A_p(Z/S;K^M_q)\rightarrow A_p(E/S;K_q^M)\oplus A_p(X/S;K_q^M)\rightarrow A_p(\mathrm{Bl}_Z(X)/S);K_q^M)\rightarrow 0.\] The claim follows now from the projective bundle formula applied to $\psi:E\rightarrow Z$.
\end{proof}

	\bibliographystyle{amsalpha}
	\bibliography{bib}
\end{document}